\def\tmk{T_{m,k}}
\def\tm2{T_{m,2}}
\def\lamh11{\lambda_{h,1,1}}
\def\th11{\t_{h,1,1}}
\newtheorem{theorem}{Theorem}[section]
\newtheorem{corollary}[theorem]{Corollary}
\newtheorem{proposition}[theorem]{Proposition}
\newtheorem{remark}{Remark}
\newtheorem{algorithm}{Algorithm}
\long\def\delete#1{}
\def\qed{\hfill$\Box$\vspace{12pt}}
\def\b0{{\bf 0}}
\def\De{\Delta}
\def\b{\beta}
\def\d{\delta}
\def\l{\lambda}
\def\s{\sigma}
\def\t{\tau}
\def\ve{\varepsilon}
\def\sp{{\rm span}}
\def\mod{{\rm mod}}
\def\diam{{\rm diam}}
\newcommand{\be}{\begin{equation}}
\newcommand{\ee}{\end{equation}}
\newcommand{\bea}{\begin{eqnarray}}
\newcommand{\eea}{\end{eqnarray}}
\newcommand{\bean}{\begin{eqnarray*}}
\newcommand{\eean}{\end{eqnarray*}}
\title{Linear and cyclic distance-three labellings of trees}
\author{Deborah King, Yang Li, Sanming Zhou\footnote{smzhou@ms.unimelb.edu.au}\\ \\
{\small \it Department of Mathematics and Statistics}\\
{\small \it The University of Melbourne}\\
{\small \it Parkville, VIC 3010, Australia}\\ 
}
\date{May 30, 2014}
\begin{document}

\openup 0.5\jot\maketitle

\vspace{-0.5cm}

\smallskip
\begin{abstract}
Given a finite or infinite graph $G$ and positive integers $\ell, h_1, h_2, h_3$, an $L(h_1, h_2, h_3)$-labelling of $G$ with span $\ell$ is a mapping $f: V(G) \rightarrow \{0, 1, 2, \ldots, \ell\}$ such that, for $i = 1, 2, 3$ and any $u, v \in V(G)$ at distance $i$ in $G$, $|f(u) - f(v)| \geq h_i$. A $C(h_1, h_2, h_3)$-labelling of $G$ with span $\ell$ is defined similarly by requiring $|f(u) - f(v)|_{\ell} \ge h_i$ instead, where $|x|_{\ell} = \min\{|x|, \ell-|x|\}$. The minimum span of an $L(h_1, h_2, h_3)$-labelling, or a $C(h_1, h_2, h_3)$-labelling, of $G$ is denoted by $\l_{h_1,h_2,h_3}(G)$, or $\s_{h_1,h_2,h_3}(G)$, respectively. Two related invariants, $\l^*_{h_1,h_2,h_3}(G)$ and $\s^*_{h_1,h_2,h_3}(G)$, are defined similarly by requiring further that for every vertex $u$ there exists an interval $I_u$ $\mod~(\ell + 1)$ or $\mod~\ell$, respectively, such that the neighbours of $u$ are assigned labels from $I_u$ and $I_v \cap I_w = \emptyset$ for every edge $vw$ of $G$. 
A recent result asserts that the $L(2,1,1)$-labelling problem is NP-complete even for the class of trees. In this paper we study the $L(h, p, p)$ and $C(h, p, p)$ labelling problems for finite or infinite trees $T$ with finite maximum degree, where $h \ge  p \ge 1$ are integers. We give sharp bounds on $\lambda_{h,p,p}(T)$, $\l^*_{h,p,p}(T)$, $\s_{h, 1, 1}(T)$ and $\s^*_{h, 1, 1}(T)$, together with linear time approximation algorithms for the $L(h,p,p)$-labelling and the $C(h, 1, 1)$-labelling problems for finite trees. We obtain the precise values of these four invariants for complete $m$-ary trees with height at least 4, the infinite complete $m$-ary tree, and the infinite $(m+1)$-regular tree and its finite subtrees induced by vertices up to a given level. We give sharp bounds on $\s_{h,p,p}(T)$ and $\s^*_{h,p,p}(T)$ for trees with maximum degree $\Delta \le h/p$, and as a special case we obtain that $\s_{h,1,1}(T) = \s^*_{h,1,1}(T) = 2h + \De - 1$ for any tree $T$ with $\Delta \le h$. 
 
{\bf Key words:}~channel assignment, frequency assignment, distance three labelling, cyclic labelling, $\l$-number, $\s$-number, tree, complete $m$-ary tree

{\bf AMS 2010 mathematics subject classification:}~05C78, 05C15 
\end{abstract}

\section{Introduction}
\label{sec:int}

\textsf{A. Motivation.}~
In a radio communication network, a channel or a set of channels is required \cite{Hale} to assign to each transmitter such that the bandwidth used is minimized while interference between transmitters of geographical proximity is maintained at an acceptable level. From a combinatorial point of view, this fundamental problem is essentially an optimal labelling problem for the corresponding interference graph, which is defined to have transmitters as its vertices such that two vertices are adjacent if and only if the corresponding transmitters are geographically close to each other. In the case when each transmitter requires only one channel, we seek an assignment of a nonnegative integer (label) to each vertex such that for $i$ from $1$ to some given integer $d $, whenever two vertices are distance $i$ apart in the graph, the difference (in absolute value) between their labels is no less than a given separation. The existence of such an assignment is not a problem if sufficiently many channels are provided. However, since bandwidth is limited and costly, a major concern is to find the minimum span required among such channel assignments.  

\textsf{B. Linear and cyclic labellings.}~
The problem above can be modelled as follows. Given a finite or infinite undirected graph $G = (V(G), E(G))$ and a sequence of nonnegative integers $h_1, \ldots, h_d$, an {\em $L(h_1, \ldots, h_d)$-labelling} of $G$ with {\em span $\ell$} is a mapping $f: V(G) \rightarrow \{0, 1, 2, \ldots, \ell\}$ such that, for $i = 1, 2, \ldots, d$ and any $u, v \in V(G)$ with $d(u, v) = i$,
\be
\label{eq:lc}
|f(u) - f(v)| \geq h_i,
\ee
where $\ell$ is a positive integer and $d(u, v)$ denotes the distance in $G$ between $u$ and $v$. (In this paper an \textit{infinite graph} means a graph with countably infinitely many vertices. A graph is meant a finite graph unless stated otherwise.) In practical terms, the \textit{label} of $u$ under $f$, $f(u)$, is the channel assigned to the
transmitter corresponding to $u$. Without loss of generality we may always assume $\min_{v \in V(G)}f(v)=0$. The \textit{$\l_{h_1, \ldots, h_d}$-number} of $G$, denoted $\l_{h_1, \ldots, h_d}(G)$, is defined \cite{GY, Hale} to be the minimum span of an $L(h_1, \ldots, h_d)$-labelling of $G$. 
Equivalently, $\l_{h_1, \ldots, h_d}(G) = \min_{f} \sp(f)$, with minimum over all $L(h_1, \ldots, h_d)$-labellings of $G$, where $\sp(f) = \max_{v \in V(G)}f(v)$. In practice this parameter measures \cite{Hale} the minimum bandwidth required by the radio
communication network under constraints (\ref{eq:lc}).
 
The $L(h_1, \ldots, h_d)$-labelling problem above is a linear model in the sense that the metric involved is the $\ell_1$-metric. Its cyclic version was studied in \cite{HLS} with a focus on small $d$. A \emph{$C(h_1, \ldots, h_d)$-labelling} of a finite or infinite graph $G$ with {\em span $\ell$} is a mapping 
$f: V(G) \rightarrow \{0, 1, 2, \cdots, \ell-1\}$ such that, for $i = 1, 2, \ldots, d$ and any $u, v \in V(G)$ with $d(u, v) = i$, 
\be 
\label{eq:cc}
|f(u) - f(v)|_{\ell} \ge h_i,
\ee
where $|x-y|_{\ell} := \min\{|x-y|, \ell-|x-y|\}$ is the {\em $\ell$-cyclic distance} between integers $x$ and $y$. A $C(h_1, \ldots, h_d)$-labelling of $G$ with span $\ell$ exists for sufficiently large $\ell$. Define the {\em $\s_{h_1, \ldots, h_d}$-number} of $G$, denoted $\s_{h_1, \ldots, h_d}(G)$, to be the minimum integer $\ell$ such that $G$ admits a $C(h_1, \ldots, h_d)$-labelling with span $\ell$. Note that $\s_{h_1, \ldots, h_d}(G)$ thus defined agrees with $\s(G; h_1, \ldots, h_d)$ in \cite{HLS} and $c_{h_1, \ldots, h_d}(G)$ in \cite{L}, but is larger by one than $\s(G; h_1, \ldots, h_d)$ in \cite{CLZ}. As observed in \cite{Gamst, HLS}, this cyclic version allows the assignment of a set of channels $f(u), f(u)+\ell, f(u)+2\ell, \ldots$ to each transmitter $u$. The possibility of providing such multiple coverage is important  \cite{HLS} in large communication systems that serve many customers simultaneously. 

We will refer to (\ref{eq:lc}) and (\ref{eq:cc}) as the {\em $L(h_1, \ldots, h_d)$ conditions} and the {\em $C(h_1, \ldots, h_d)$ conditions $\mod~\ell$}, respectively. 

\textsf{C. Literature review.}~
The linear and cyclic labelling problems above are 
interesting in both theory and practical applications, and as such they have been
studied extensively over many years, especially in the case when $d=2$. In the simplest looking case when $d=1$, the $L(h_1)$-labelling problem becomes the classic vertex-colouring 
problem (which is important and difficult already) because $\l_{h_1}(G) = h_1(\chi(G)-1)$, where $\chi(G)$ is the
chromatic number of $G$. Another interesting and important special case is that $\l_{1, \ldots, 1}(G) = \chi(G^d)-1$, where $G^d$ is the $d$-th power of $G$ defined to have vertex set $V(G)$ such that two vertices are adjacent if and only if their distance in $G$ is at most $d$. In the case when $d=2$, many interesting
results on $\l_{h_1, h_2}$ have been obtained by various researchers for many families of finite 
graphs, especially when $(h_1, h_2) = (2,1)$; see e.g.~\cite{CK, CLZ1, CLZ, GM, GY, ZHOU, ZHOU1} and \cite{Cala} for an extensive bibliography. Griggs and Yeh \cite{GY} conjectured that $\l_{2,1}(G) \le \Delta^2$ for any graph $G$ with maximum degree $\Delta \ge 2$. This has been confirmed for several classes of graphs, including chordal graphs \cite{Sakai}, generalized Petersen graphs \cite{GM1}, Hamiltonian graphs with $\Delta \le 3$ \cite{Kang}, etc. Improving earlier results \cite{CK, GY}, Goncalves \cite{G} proved that $\l_{2,1}(G) \le \Delta^2+\Delta-2$ for any graph $G$ with $\Delta \ge 2$. A recent breakthrough in this direction by Havet, Reed and Sereni \cite{HRS} asserts that for any $h \ge 1$ there exists a constant $\Delta(h)$ such that every graph with maximum degree $\Delta \ge \Delta(h)$ satisfies $\l_{h, 1} \le \Delta^2$. In particular, the Griggs-Yeh conjecture is true for any graph with sufficiently large maximum degree.
 
In \cite{GY} it was proved that $\l_{2,1}(T) = \De+1$ or $\De+2$ for any tree $T$. A polynomial time algorithm for determining $\l_{2,1}(T)$ was given in \cite{CK}, while in general it was 
conjectured \cite{FKP} that the problem of determining 
$\l_{h_1, h_2}$ (where $h_1 > h_2 \ge 1$) for trees is NP-hard. In
\cite{CKKLY} it was proved that $\De + h - 1 \le \l_{h,1}(T)
\le \min\{\De + 2h-2, 2\De+h-2\}$ with both 
bounds attainable. In \cite{GM2}, for $h_1 \ge h_2$ the $\l_{h_1, h_2}$-number
was derived for infinite regular trees. In \cite{CPP}, for $h_1 < h_2$ the smallest integer 
$\l$ such that every tree of maximum degree $\De \ge 2$ admits an
$L(h_1,h_2)$-labelling of span at most $\l$ was studied.

In \cite{HLS} the $C(h_1, \ldots, h_d)$ labelling problem was studied for infinite triangular lattice, infinite square lattice and infinite line lattices with a focus on the cases $d=2, 3$. In \cite{L} it was proved that, for a graph $G$ with $n$ vertices, if its complement $G^c$ is Hamiltonian, then $\s_{2,1}(G) \le n$; otherwise $\s_{2,1}(G) = n + p_v(G^c)$, where $p_v(G^c)$ is the smallest number of vertex-disjoint paths covering $V(G^c)$. In \cite{CLZ} it was proved that for a Hamming graph $K_{n_1} \Box \cdots \Box K_{n_t}$ (where $n_1 \ge  \cdots \ge n_t$ and $\Box$ denotes the Cartesian product), if $n_1$ is sufficiently large relative to $n_2, \ldots, n_t$, then $\l_{2,1}$, $\l_{1,1}$ and their `no-hole' counterparts are all equal to $n_1n_2 - 1$, and $\s_{2,1}$, $\s_{1,1}$ and their `no-hole' counterparts are all equal to $n_1 n_2$. 

Relatively few results were known for the linear and cyclic labelling problems when $d=3$. 
In \cite{KRZ} King, Ras and Zhou obtained sharp bounds on $\l_{h,1,1}$ for trees. They asked whether for fixed $h \ge 2$ the $L(h,1,1)$-labelling problem for trees can be solved in polynomial time. Answering this question, in \cite{FGKLP} it was proved among others that the $L(2,1,1)$-labelling problem is NP-complete for trees. This indicates that even for trees the $L(h_1, h_2, h_3)$-labelling problem is difficult in general. In \cite{ZHOU2} the third author of the present paper obtained tight upper bounds on $\l_{h_1, h_2, h_3}(Q_n)$ by using a group-theoretic approach, where $Q_n$ is the $n$-dimensional cube. In \cite{CFTV} a linear time approximation algorithm to $L(h, 1, 1)$-label an outerplanar graph was given, using span $3\De+8$ when $h=1$ and $\De \ge 6$, and $3\De + 2h + 6$ when $h \ge 2$ and $\De \ge 4h+7$. In \cite{BP} an $O(d^2 h_1 n)$-time approximation algorithm was given for the $L(h_1, \ldots, h_d)$-labelling problem for trees, with performance ratio depending on $h_i$ and $\l(T^i)$ for $i=1, \ldots, d$. In a recent paper \cite{KSH} the $\l_{h_1, h_2, 1}$-number was determined for the direct product of $K_2$ and two other complete graphs under various conditions on $h_1$ and $h_2$.

\textsf{D. Elegant labellings.}~
Noting that the labellings producing the upper bounds in \cite{FGKLP, KRZ} assign an interval to the neighbourhood of each vertex, the following notion was introduced in \cite{FGKLP}. Let $f$ be an $L(h_1, \ldots, h_d)$ or $C(h_1, \ldots, h_d)$-labelling of $G$ with span $\ell$. We call $f$ {\em elegant} if for every vertex $u$, there exists an interval $I_u$ modulo $\ell + 1$ or $\ell$, respectively, such that $f(N(u)) \subseteq I_u$ and for every edge $uv$ of $G$, $I_u \cap I_v = \emptyset$. The minimum integer $\ell$ such that $G$ admits an elegant $L(h_1, \ldots, h_d)$-labelling, and an elegant $C(h_1, \ldots, h_d)$-labelling, respectively, of span $\ell$ is denoted by $\l^*_{h_1, \ldots, h_d}(G)$ and $\s^*_{h_1, \ldots, h_d}(G)$, respectively; these invariants are defined to be $\infty$ if $G$ does not admit an elegant labelling. Among others it was proved in \cite{FGKLP} that for trees $\l^*_{h, 1, 1}$ and $\s^*_{h, 1, 1}$ can be computed in polynomial time for any $h \ge 1$. We notice that the upper bound in \cite[Theorem 1]{KRZ} for $\l_{h,1,1}(T)$ is actually an upper bound for $\l^*_{h,1,1}(T)$.

\textsf{E. Main results in this paper.}~
In this paper we study the $L(h, p, p)$ and $C(h, p, p)$ labelling problems for finite or infinite trees $T$ with finite maximum degree $\De \ge 3$, where $h \ge  p \ge 1$ are integers. Four families of trees play a key role in our study, namely the complete $m$-ary tree $\tmk$ with height $k$, the infinite complete $m$-ary tree $T_{m,\infty}$, the infinite $(m+1)$-regular tree $\hat{T}_{m,\infty}$, and the finite subtree $\hat{T}_{m, k}$ of $\hat{T}_{m,\infty}$ induced by its vertices up to level $k$ from a fixed vertex, where $m, k \ge 2$ are integers. Since any finite tree is a subtree of some $\tmk$ or $\hat{T}_{m, k}$ and any infinite tree is a subtree of some $T_{m,\infty}$ or $\hat{T}_{m,\infty}$, understanding $\l_{h, p, p}$, $\l^*_{h, p, p}$, $\s_{h, p, p}$ and $\s^*_{h, p, p}$ for these special trees enables us to obtain upper bounds on these invariants for arbitrary trees.  

The main results in this paper are as follows. In Section \ref{sec:linear} we give sharp bounds (Theorem \ref{thm:h11}) on $\lambda_{h,p,p}(T)$ and $\l^*_{h,p,p}(T)$ and a $(1+\ve)$-factor approximation algorithm for the $L(h,p,p)$-labelling problem for finite trees, where $\ve = (\De - 1)/(\De_2 - 1)$ with $\De_2 = \max_{uv \in E(T)} (d(u)+d(v))$. If $T$ contains $T_{\De - 1, 2}$ or $\hat{T}_{\De - 1, 2}$ as a subtree, then we obtain better lower bounds for or the exact value of $\lambda_{h,p,p}(T)$ and $\l^*_{h,p,p}(T)$ (Theorem \ref{core:h11}). In particular, we obtain (Corollary \ref{thm:hpq}) the precise values of these invariants for $\tmk$ ($k \ge 4$), $T_{m,\infty}$, $\hat{T}_{m, k}$ and $\hat{T}_{m,\infty}$.
 
In Section \ref{sec:cyclic} we study $C(h, p, p)$-labellings and elegant $C(h, p, p)$-labellings of trees in the case when $h/p \ge \De$. We first give sharp bounds on $\s_{h,p,p}(T)$ and $\s^*_{h,p,p}(T)$ (Theorem \ref{thm:c}) under this condition, and as special cases we obtain that for $T = \tmk$, $T_{m,\infty}$, $\hat{T}_{m, k}$ or $\hat{T}_{m,\infty}$, $\s_{h,p,p}(T)$ and $\s^*_{h,p,p}(T)$ are between $2h+m$ and $2h+(m+1)p-1$ when $h \ge (m+1)p$. Theorem \ref{thm:c} implies $\s_{h,1,1}(T) = \s^*_{h,1,1}(T) = 2h + \De - 1$ and a linear time exact algorithm for computing $\s_{h,1,1}(T)$ for general $T$ when $h \ge \De$ (Corollary \ref{core:c}). In particular, for any $h \ge m+1$ and $T = \tmk$, $T_{m,\infty}$, $\hat{T}_{m, k}$ or $\hat{T}_{m,\infty}$, we obtain $\s_{h,1,1}(T) = \s^*_{h,1,1}(T) = 2h+m$ (Corollary \ref{core:c}). 

In Section \ref{sec:cyc} we focus on $C(h, 1, 1)$-labellings and elegant $C(h, 1, 1)$-labellings. We first give exact formulas for $\s_{h,1,1}(T)$ and $\s^*_{h,1,1}(T)$ for $T = \tmk$ ($k \ge 4$), $T_{m,\infty}$, $\hat{T}_{m, k}$ or $\hat{T}_{m,\infty}$ (Theorem \ref{thm:ck3}). This enables us to obtain sharp upper bounds on these invariants for general $T$ (Theorem \ref{thm:s11forD}) and subsequently a linear time $(4\De + 1)/(2\De + 4)$-approximation algorithm for the $C(h, 1, 1)$-labelling problem for finite trees (Theorem \ref{thm:1.4}). 
We conclude the paper by giving remarks and questions in Section \ref{sec:concl}.

The following notions arise naturally from our study in this paper: An $L(h, 1, 1)$ or $C(h, 1, 1)$-labelling $f$ with span $\ell$ is called {\em super elegant} if for every vertex $u$, $f(N(u))$ is an interval or a circular interval $\mod~\ell$, respectively. See Remarks \ref{re:super}, \ref{re:c} and \ref{re:c1} and the discussion in Section \ref{sec:concl}.

  
\section{Preliminaries}
\label{sec:notation}

\begin{table}
\begin{center}
\caption{Notation}

\bigskip
  \begin{tabular}{l|l}
   \hline
Notation & Definition \\ \hline \hline
$d(v)$ & Degree of a vertex $v$ in the graph under consideration\\ \hline
$N(v)$ & Neighbourhood of $v$ in the graph under consideration \\ \hline
$C(v)$ & Set of children of a vertex $v$ in the rooted tree under consideration \\ \hline 
$\De(T)$ or $\De$ & Maximum degree of a tree $T$ \\ \hline
$\De_{2}(T)$ or $\De_2$ & $\max_{uv \in E(T)} (d(u)+d(v))$\\ \hline 
$\diam(T)$ or $\diam$ & Diameter of a tree $T$\\ \hline
$T_{m, k}$ & Complete $m$-ary tree with height $k$, namely the $m$-ary tree in which \\
& all leaves are at level $k$ \\ \hline
$T_{m, \infty}$ & Infinite complete $m$-ary tree, namely the rooted tree with\\
& countably infinitely many vertices such that every vertex \\
& (including the root) has exactly $m$ children\\ \hline
$\hat{T}_{m, \infty}$ & The $(m+1)$-regular infinite tree, namely the tree with countably \\
& infinitely many vertices such that every vertex has degree $m+1$. We \\
& treat $\hat{T}_{m, \infty}$ as a rooted tree with root at any chosen vertex. \\ \hline
$\hat{T}_{m, k}$ & The finite subtree of $\hat{T}_{m, \infty}$ induced by its vertices up to level $k$\\ \hline
$u_0, u_i, u_{ij}, u_{i_1 \ldots i_{t}}$ & For a rooted tree $T$, we use $u_0$ to denote the root of $T$, $u_i$ ($1 \le i \le d(u_0)$) \\
& the children of $u_0$, $u_{ij}$ ($1 \le j \le d(u_i)-1$) the children of $u_i$, and in \\
& general $u_{i_1 \ldots i_{t} j}$ ($1 \le j \le d(u_{i_1 \ldots i_{t}})-1$) the children of $u_{i_1 \ldots i_{t}}$.\\
\hline
\end{tabular}
 \label{tbl:notation}
  \end{center}
\end{table}

We will frequently use the following observation: If $H$ is a subgraph of a graph $G$, then $\l_{h, p, p}(H) \le \l_{h, p, p}(G)$ and $\s_{h, p, p}(H) \le \s_{h, p, p}(G)$. Note that if $f$ is a $C(h_1, \ldots, h_d)$-labelling of $G$ with span $\ell$, then so is $f+c$ for any fixed integer $c$, where $f+c$ is defined by $(f+c)(u) = (f(u)+c)~\mod~\ell$. Thus without loss of generality we may assign $0$ to any chosen vertex when computing $\s_{h_1,  \ldots, h_d}(G)$. 

It is not hard to see \cite{HLS} that, for any $h_1 \ge \cdots \ge h_d$,  
\be
\label{eq:lvsc1}
\l_{h_1,  \ldots, h_d}(G) + 1 \le \s_{h_1, \ldots, h_d}(G) \le \l_{h_1, \ldots, h_d}(G) + h_1.
\ee 
Since every $C(h_1, \ldots, h_d)$-labelling with span $\ell + 1$ is an $L(h_1, \ldots, h_d)$-labelling with span $\ell$, we have (\cite[Proposition 2]{FGKLP}), for any $h_1 \ge \cdots \ge h_d$, 
\be
\label{eq:elegant}
\l_{h_1, \ldots, h_d}(G) + 1 \le \s_{h_1, \ldots, h_d}(G) \le \s^*_{h_1, \ldots, h_d}(G),
\ee
\be
\label{eq:elegant1}
\l_{h_1, \ldots, h_d}(G) + 1 \le \l^*_{h_1, \ldots, h_d}(G) + 1 \le \s^*_{h_1, \ldots, h_d}(G).
\ee
 
In a finite or infinite rooted tree, a vertex of degree 1 other than the root is called a {\em leaf}. The {\em level} of a vertex in a rooted tree is its distance to the root. The {\em height} of a rooted tree is the maximum level of a vertex or $\infty$ when the tree is infinite. A rooted tree $T$ is called a (full) {\em $m$-ary tree}, where $m \ge 1$ is an integer, if every non-leaf vertex of $T$ has exactly $m$ children (thus the root has degree $m$ and all other non-leaf vertices have degree $m+1$). 

As in \cite{KRZ}, for a graph $G$, we define $\De_2(G) := \max_{uv \in E(G)} (d(u)+d(v))$ and call an edge of $G$ a {\em heavy edge} if it attains $\De_2(G)$.

Table \ref{tbl:notation} summarizes the notation used in the paper. To avoid triviality we assume $m, k \ge 2$ in $\tmk$, $T_{m, \infty}$, $\hat{T}_{m, k}$ and $\hat{T}_{m, \infty}$.


\section{$L(h,p,p)$-labellings of trees}
\label{sec:linear}

\begin{theorem}
\label{thm:h11}
Let $h \ge p \ge 1$ be integers. Let $T$ be a finite tree with $\diam \ge 3$ or an infinite tree with finite maximum degree. Then 
\begin{equation}
\label{eq:h11}
\max\{(\De_2 - 1)p, h+(\De-1)p\} \le \lambda_{h,p,p}(T) \le \l^*_{h,p,p}(T) \le h + 2(\De - 1)p
\end{equation}
with both bounds attainable. Moreover, there exists a linear time $(1+\ve)$-factor approximation algorithm for the $L(h,p,p)$-labelling problem for the class of finite trees, where $\ve = (\De - 1)/(\De_2 - 1) < 1$.
\end{theorem}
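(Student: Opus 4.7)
The plan is to prove (\ref{eq:h11}) by establishing the two lower bounds separately, constructing an explicit elegant labelling of span $h+2(\De-1)p$, and then showing that this construction runs in linear time and meets the claimed approximation ratio; attainability will be deferred to Corollary~\ref{thm:hpq}.

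For the lower bound $\lambda_{h,p,p}(T)\ge(\De_2-1)p$, I would take a heavy edge $uv$ with $d(u)+d(v)=\De_2$ and examine $S:=\{u,v\}\cup(N(u)\setminus\{v\})\cup(N(v)\setminus\{u\})$. As $T$ is a tree, $S$ consists of exactly $\De_2$ distinct vertices and every two of them lie at distance $1$, $2$ or $3$ in $T$; with $h\ge p$, the $L(h,p,p)$ conditions then force the labels of $S$ to be pairwise $p$-separated, giving $\sp(f)\ge(\De_2-1)p$. For $\lambda_{h,p,p}(T)\ge h+(\De-1)p$, I would pick a vertex $u$ of degree $\De$ and look at $\{u\}\cup N(u)$: the $\De$ neighbours of $u$ are pairwise at distance $2$, so their labels occupy an interval of length at least $(\De-1)p$, and $f(u)$ must lie at distance at least $h$ from each of them, which pushes the total span to at least $h+(\De-1)p$.

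For $\lambda^*_{h,p,p}(T)\le h+2(\De-1)p$ I would construct an elegant labelling using the level-parity bipartition of $T$. Root $T$ at an arbitrary vertex $u_0$, set $\ell:=h+2(\De-1)p$, and define the two disjoint intervals $L_0:=[0,(\De-1)p]$ and $L_1:=[h+(\De-1)p,\ell]$, separated by a gap of $h$. Inside $L_j$ the only labels ever used are the $\De$ grid points $a_j,a_j+p,\ldots,a_j+(\De-1)p$, where $a_0=0$ and $a_1=h+(\De-1)p$; vertices at even levels receive grid points of $L_0$ and vertices at odd levels receive grid points of $L_1$. I would then BFS through $T$: label $u_0$ with any grid point of $L_0$, and once a vertex $u$ at level $i$ has been labelled, assign to its at most $d(u)-1$ children (or $d(u)$ children when $u=u_0$) distinct grid points of $L_{(i+1)\bmod 2}$, avoiding the grid point used by the parent of $u$ when $u\ne u_0$. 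Since $L_{(i+1)\bmod 2}$ contains exactly $\De$ grid points and at most one is blocked, this assignment is always possible. Verifying the $L(h,p,p)$ conditions splits by parity: distance-$1$ and distance-$3$ pairs always straddle the two intervals and so enjoy a gap of at least $h\ge p$, whereas same-parity distance-$2$ pairs (two siblings of a common vertex, or a vertex and one of its grandchildren) sit inside the same $L_j$ but receive distinct grid points by our avoidance rule. Finally, taking $I_u:=L_{(i+1)\bmod 2}$ when $u$ is at level $i$ gives $f(N(u))\subseteq I_u$ and $I_u\cap I_v=\emptyset$ for every edge $uv$, so the labelling is elegant.

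The BFS construction above is a linear-time algorithm returning a labelling of span $h+2(\De-1)p$. Splitting the analysis at the threshold $h=(\De_2-\De)p$ and comparing with the larger of the two lower bounds yields
$$h+2(\De-1)p\le\left(1+\frac{\De-1}{\De_2-1}\right)\max\bigl\{(\De_2-1)p,\,h+(\De-1)p\bigr\}\le(1+\ve)\,\lambda_{h,p,p}(T),$$
which is the desired $(1+\ve)$-approximation guarantee; in the regime $h\ge(\De_2-\De)p$ one uses the $h+(\De-1)p$ lower bound, and in the complementary regime the $(\De_2-1)p$ lower bound. Attainability of both bounds in (\ref{eq:h11}) will follow from the explicit values of $\lambda_{h,p,p}$ and $\lambda^*_{h,p,p}$ for $T_{m,k}$, $T_{m,\infty}$, $\hat{T}_{m,k}$ and $\hat{T}_{m,\infty}$ computed in Corollary~\ref{thm:hpq}. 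The main subtlety lies in the elegant upper bound: one has to confirm that, at the moment a vertex's children are being labelled, the only previously placed label lying in the children's target interval $L_{(i+1)\bmod 2}$ is that of their grandparent, so that exactly $\De-1$ grid points remain available for at most $\De-1$ children and the distance-$2$ constraint between grandparent and grandchildren is automatically enforced.
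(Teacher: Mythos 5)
Your proposal is correct and follows essentially the same route as the paper: the same two lower-bound arguments (heavy edge giving $(\De_2-1)p$, and a maximum-degree vertex with its neighbourhood giving $h+(\De-1)p$), and the same upper-bound construction — your two grids of $\De$ labels in $[0,(\De-1)p]$ and $[h+(\De-1)p,\,h+2(\De-1)p]$ assigned by level parity with the grandparent's label excluded is exactly the paper's Algorithm~\ref{alg:lambda}, and your parity analysis of distance-$1,2,3$ pairs, elegance via $I_u=L_{(i+1)\bmod 2}$, and case split at $h=(\De_2-\De)p$ for the ratio match the paper's verification. The only cosmetic difference is that the paper exhibits lower-bound attainability directly (the $5$-vertex diameter-$3$ tree with $2p\le h\le 3p$) rather than deferring it to Corollary~\ref{thm:hpq}, but this does not affect the correctness of your argument.
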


This algorithm can be described as follows. Given a finite or infinite tree $T$, we may treat it as a rooted tree with root at any chosen vertex $u_0$. Define
$$
A_0 := \{0, p, 2p, \ldots, (\De - 1)p\}
$$ 
$$
A_1 := \{h+(\De - 1)p, h+\De p, \ldots, h+(2\De - 2)p\}.
$$
We construct a labelling $f: V(T) \rightarrow \{0, 1, \ldots, h+(2\De - 2)p\}$ by the following algorithm.

\begin{algorithm}
\label{alg:lambda}
1. Set $f(u_0) = 0$.

2. Assign distinct labels from $A_1$ to the children of $u_0$.

3. Inductively, assume that for some $t \ge 1$ all vertices of $T$ up to level $t$ have been labelled. Then, for every vertex $u_{i_1 \ldots i_{t}}$ of $T$ at level $t$, 
\begin{itemize}
\item if $t$ is odd, then assign distinct labels from $A_0 \setminus \{f(u_{i_1 \ldots i_{t-1}})\}$ to the children of $u_{i_1 \ldots i_{t}}$; 
\item if $t$ is even, then assign distinct labels from $A_1 \setminus \{f(u_{i_1 \ldots i_{t-1}})\}$ to the children of $u_{i_1 \ldots i_{t}}$.
\end{itemize}
If $T$ is infinite, repeat step 3 indefinitely; otherwise, stop until all vertices are labelled.  
\end{algorithm}

\begin{proof}\textbf{of Theorem \ref{thm:h11}}~
We prove the lower bound first. Let $xy$ be a heavy edge of $T$ and $f$ an arbitrary $L(h,p,p)$-labelling of $T$. Since any two vertices in $N(x) \cup N(y)$ are within distance three apart in $T$, they should receive labels that differ by at least $p$. Since $|N(x) \cup N(y)| = \De_2$, it follows that $\sp(f) \ge (\De_2 - 1)p$. 
 
Let $z$ be a vertex of $T$ with $d(z) = \De$. Since the $\De$ vertices in $N(z)$ are pairwise distance two apart, we have $\max f(N(z)) \ge (\De - 1)p$. 
Moreover, since $z$ is adjacent to these $\De$ vertices, we must have 
$\max \left(f(N(z)) \cup \{f(z)\}\right) \ge h + (\De-1)p$
and so $\sp(f) \ge h + (\De-1)p$. Now that $\sp(f) \ge\max\{(\De_2 - 1)p, h + (\De-1)p\}$ for any $L(h,p,p)$-labelling $f$ of $T$, the lower bound in (\ref{eq:h11}) follows. This bound is achieved by, for example, the unique tree (up to isomorphism) with five vertices and diameter three, for any positive integers $h$ and $p$ with $2p \le h \le 3p$.   

To prove the upper bound we show that Algorithm \ref{alg:lambda} produces an $L(h,p,p)$-labelling of $T$. It is possible to carry out step 3 in this algorithm for all $t \ge 1$ because $|A_0| = |A_1| = \De$ and every vertex at level $t \ge 1$ has at most $\De - 1$ children. Note that, for distinct vertices $u_{i_1 \ldots i_{t}}$ and $u_{j_1 \ldots j_{t}}$ at level $t$, in step 3 we label the vertices in $C(u_{i_1 \ldots i_{t}})$ and that in $C(u_{j_1 \ldots j_{t}})$ independently. This does not violate the $L(h, p, p)$-conditions because any vertex in the former is at least distance four apart from any vertex in the latter. The vertices in $C(u_{i_1 \ldots i_{t}})$ are pairwise distance two apart. Apart from these vertices, the only other vertices which are within distance three from the vertices in $C(u_{i_1 \ldots i_{t}})$ are: $u_{i_1 \ldots i_{t}}$; $u_{i_1 \ldots i_{t-1}}$; and the vertices in $(C(u_{i_1 \ldots i_{t-1}}) \setminus \{u_{i_1 \ldots i_{t}}\}) \cup \{u_{i_1 \ldots i_{t-2}}\}$, which are distance three apart from any vertex in $C(u_{i_1 \ldots i_{t}})$. 

Observe that (i) the labels in $A_0$ ($A_1$, respectively) pairwise differ by at least $p$, (ii) the difference (in absolute value) between any label in $A_0$ and any label in $A_1$ is at least $h$, (iii) $u_{i_1 \ldots i_{t-1}}$ and the vertices in $C(u_{i_1 \ldots i_{t}})$ all use labels from $A_0$ if $t \ge 1$ is odd and from $A_1$ if $t \ge 2$ is even, and (iv) the vertices in $C(u_{i_1 \ldots i_{t}})$ and that in $C(u_{i_1 \ldots i_{t-1}}) \cup \{u_{i_1 \ldots i_{t-2}}\}$ use labels from different set of $A_0$ and $A_1$. Using these and by induction, it can be verified that Algorithm \ref{alg:lambda} produces an $L(h, p, p)$-labelling of $T$ with span $h + 2(\De - 1)p$. Moreover, from the way the labels are assigned it is clear that this is an elegant $L(h,p,p)$-labelling of $T$. Therefore, $\l^*_{h,p,p}(T) \le h + 2(\De - 1)p$, which together with $\lambda_{h,p,p}(T) \le \l^*_{h,p,p}(T)$ (by (\ref{eq:elegant1})) proves the upper bound in (\ref{eq:h11}). We will see in Corollary \ref{thm:hpq} that this upper bound is attained by $\tmk$ ($k \ge 4$), $T_{m, \infty}$, $\hat{T}_{m, k}$ ($k \ge 2$) and $\hat{T}_{m, \infty}$.

By (\ref{eq:h11}) the performance ratio of Algorithm \ref{alg:lambda} is at most
$$
\frac{h+2(\De-1)p}{\max\{(\De_2 - 1)p, h+(\Delta-1)p\}}.
$$
It can be verified that, no matter whether $h \le (\De_2 - \De)p$ or $h \ge (\De_2 - \De)p$, this ratio is at most $1+\ve$. Thus Algorithm \ref{alg:lambda} is a $(1+\ve)$-factor approximation algorithm for the $L(h,p,p)$-labelling problem for finite trees. Obviously it runs in linear time.  
\end{proof}
\qed 

\begin{remark}
{\em
An $O(d^2 h_1 n)$-time approximation algorithm was given in \cite{BP} for the $L(h_1, \ldots, h_d)$-labelling problem for trees. This can be specified to give an $O(9hn)$-time approximation algorithm for the $L(h, p, p)$-labelling problem for trees. The performance ratio \cite[Theorem 4]{BP} of this algorithm relies on $(h, p)$ and the chromatic numbers $\chi(T)\, (= 2), \chi(T^2)$ and $\chi(T^3)$, and it may be as big as $6$ in some cases. The ratio in Theorem \ref{thm:h11} is in general smaller and does not require information about $\chi(T^2)$ and $\chi(T^3)$.
}
\end{remark}

The lower bound in (\ref{eq:h11}) can be improved when $T$ contains $T_{\De-1, 2}$ or $\hat{T}_{\De-1, 2}$ as a subtree, as we now show in the following result. 

\begin{theorem}
\label{core:h11}
Let $h \ge p \ge 1$ be integers, and $T$ a finite tree with $\diam \ge 3$ or an infinite tree with finite maximum degree. 
\begin{itemize}
\item[\rm (a)] If $T$ contains $T_{\De-1, 2}$ as a subtree, then   
\be
\label{eq:subtree}
h + (2\Delta-3)p \le \l_{h,p,p}(T) \le \l^*_{h,p,p}(T) \le h + 2(\Delta-1)p;
\ee
\item[\rm (b)] if $T$ contains $\hat{T}_{\De-1, 2}$ as a subtree, then    
\be
\label{eq:subtree1}
\l_{h,p,p}(T) = \l^*_{h,p,p}(T) = h + 2(\Delta-1)p.
\ee
\end{itemize}
\end{theorem}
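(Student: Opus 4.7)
The plan is as follows. For both (a) and (b), the upper bound $\lambda_{h,p,p}(T) \le \lambda^*_{h,p,p}(T) \le h + 2(\De-1)p$ follows at once from Theorem~\ref{thm:h11}, since containing $T_{\De-1,2}$ or $\hat T_{\De-1,2}$ as a subtree (each of which has maximum degree $\De$) forces $\De(T) = \De$, and Algorithm~\ref{alg:lambda} produces an elegant labelling witnessing the upper bound. For the lower bounds, by the subgraph monotonicity of $\lambda_{h,p,p}$ and the chain $\lambda \le \lambda^*$, it suffices to prove
$\lambda_{h,p,p}(T_{\De-1,2}) \ge h + (2\De-3)p$ for (a) and $\lambda_{h,p,p}(\hat T_{\De-1,2}) \ge h + 2(\De-1)p$ for (b).

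For (a), I let $f$ be any $L(h,p,p)$-labelling of $T_{\De-1,2}$ with root $u_0$ (degree $\De-1$) and a fixed child $u_1$ (degree $\De$), and look at $A = N[u_0]\cup N[u_1]$, which has $|A| = 2\De-1$ vertices pairwise at tree-distance $\le 3$. Thus $f(A)$ consists of $2\De-1$ distinct labels pairwise $\ge p$ apart, whose sorted sequence $a_1 < a_2 < \cdots < a_{2\De-1}$ has consecutive gaps $g_i := a_{i+1}-a_i \ge p$, giving the baseline $(2\De-2)p$ (which handles $h=p$). For $h>p$ I locate $f(u_0)$ and $f(u_1)$ among the $a_i$'s: each of the $\De-1$ tree-neighbours of $u_0$ in $A$ is $\ge h$ from $f(u_0)$ in label, and each of the $\De-1$ tree-neighbours of $u_1$ in $A$ is $\ge h$ from $f(u_1)$; a short LP-style tally shows that regardless of where $f(u_0),f(u_1)$ sit in the sorted order, two of the gaps (one "near" $f(u_0)$ and one "near" $f(u_1)$) must absorb the $h$-constraint, giving $\sum g_i \ge h + (2\De-3)p$.

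For (b), the analogous $A = N[u_0]\cup N[u_1]$ has $2\De$ vertices (since $u_0$ also has degree $\De$), but the LP bound on $A$ alone only yields $\max\{h+(\De-1)p,(2\De-1)p\}$, which is strictly less than $h + 2(\De-1)p$ when $h>p$ and $\De\ge 3$. The extra leverage comes from the grandchildren of \emph{every} level-$1$ vertex. I shift so $f(u_0)=0$, and by negating labels WLOG at least $\lceil\De/2\rceil$ of $f(u_1),\ldots,f(u_\De)$ are positive. In the principal case where \emph{all} $f(u_i)\ge h$ (pairwise $\ge p$), pick $u_1$ with minimum positive label (so $f(u_1)$ can be as small as $h$) while the $u_i$ with largest label contributes $f(u_i) \ge h+(\De-1)p$. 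The $\De-1$ children $f(u_{1j})$ must avoid the $p$-neighbourhood of $0$, the $p$-neighbourhood of each $f(u_k)$ with $k\ne 0,1$, and the $h$-neighbourhood of $f(u_1)$; unioning these forbidden zones pins each $f(u_{1j})$ into $(-\infty,-p]\cup[h+\max\{h,\De p\},\infty)$. The $\De-1$ grandchildren, pairwise $\ge p$, then push either the minimum down by at least $(\De-1)p$ below $0$ or the maximum up by at least $h+\max\{h,\De p\}+(\De-2)p$ above $0$; in either regime ($h\ge \De p$ or $h<\De p$) the resulting span is at least $h + 2(\De-1)p$. The remaining case, where some $f(u_i)$ lie on each side of $0$, is easier: the $u_i$-labels alone already span $\ge 2h+(\De-2)p$, and an analogous grandchild argument on an extremal $u_i$ contributes the residual $(\De-1)p-(h-p)$ needed.

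The main obstacle is the uniform treatment of the two sub-regimes $h\ge \De p$ and $h<\De p$ in case (b): the "high side" of $f(u_{1j})$ starts at $2h$ in the former but must jump to $h+\De p$ in the latter to clear the forbidden zones, and only by tracking both quantities simultaneously does one see that $h + \max\{h,\De p\} + (\De-2)p \ge h + 2(\De-1)p$ in both regimes. Once the bound on $\lambda_{h,p,p}$ is in hand, $\lambda^*_{h,p,p}(T) \ge \lambda_{h,p,p}(T) \ge h + 2(\De-1)p$ gives the elegant version for free, completing (b); the same elegant upper bound in (a) comes from Algorithm~\ref{alg:lambda}.
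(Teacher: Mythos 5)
Both halves of your lower-bound argument have genuine gaps; the upper bounds via Theorem~\ref{thm:h11} are fine. For (a), the key step is false as stated: the constraints induced on $A=N[u_0]\cup N[u_1]$ alone do \emph{not} force span at least $h+(2\De-3)p$, whether $u_1$ is a fixed child or even the child of minimum label. Take $\De=3$ (so $T_{\De-1,2}=T_{2,2}$), $h=2$, $p=1$, and put $f(u_0)=3$, $f(u_1)=0$, $f(u_2)=1$, $f(u_{11})=2$, $f(u_{12})=4$: the four adjacent pairs $u_0u_1$, $u_0u_2$, $u_1u_{11}$, $u_1u_{12}$ have label gaps $3,2,2,4\ge h$, and all remaining pairs in $A$ (at distance $2$ or $3$) have gaps $\ge p$, yet the span is $4<5=h+(2\De-3)p$. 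So no ``LP-style tally'' on these $2\De-1$ labels can deliver the bound. (Also, a tally forcing two gaps of size $\ge h$ would prove $\l_{h,p,p}\ge 2h+(2\De-4)p$, contradicting the exact value $h+(2\De-3)p$ in Corollary~\ref{thm:hpq} when $h>p$; the optimal labelling has a single $h$-gap serving both vertices.) The bound is only recovered by leaving $A$: in the configuration above the children of $u_2$ cannot be placed within $\{0,\dots,4\}$, since a label at distance $\ge 2$ from $1$ and $\ge 1$ from $0$ and $3$ must equal $4$, and two such labels are needed. The paper instead chooses $u_1$ adaptively as the child of minimum label and splits on whether $f(u_1)$ exceeds every label of $N(u_1)$; the easy case is settled inside $A$, but the configuration exhibited above is precisely the delicate complementary case, and your write-up contains no mechanism for it.

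For (b), the skeleton (normalise $f(u_0)=0$, exploit the grandchildren of an extremal child) is in the right spirit, but the quantitative claims are verified only at the extremal configuration. The pinning of every $f(u_{1j})$ into $(-\infty,-p]\cup[h+\max\{h,\De p\},\infty)$ presupposes both that $f(u_1)=h$ and that the remaining children are packed immediately above it: if $f(u_1)>h$ (you only know $f(u_1)\ge h$, and you may assume $f(u_1)<h+(\De-1)p$ only after disposing of the opposite case) the window $[p,\,f(u_1)-h]$ is available to grandchildren, and if the other children are spread out a high-side grandchild can interleave strictly below $h+\De p$ (e.g.\ children at $h$, $h+p$ and one much larger label). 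These configurations must be handled explicitly, typically by showing the span is already large there. Likewise the grandchildren may split between the low and high sides, so the dichotomy ``push the minimum down by $(\De-1)p$ or the maximum up by $h+\max\{h,\De p\}+(\De-2)p$'' needs the mixed-split accounting written out, and the mixed-sign case for the children is only gestured at. As it stands, (a) rests on a false lemma and (b) is an outline of the extremal case rather than a proof.
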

 
\begin{proof}
(a) It suffices to prove that, for any $m \ge 2$, 
\be
\label{eq:hppm2}
\l_{h,p,p}(T_{m,2}) \ge h + (2m-1)p.
\ee
In fact, if $T$ contains $T_{\De-1, 2}$ as a subtree, then by (\ref{eq:hppm2}) we have $\l_{h,p,p}(T) \ge \l_{h,p,p}(T_{\De-1, 2}) \ge h + (2\Delta-3)p$, which together with the upper bound in (\ref{eq:h11}) implies (\ref{eq:subtree}). 

We now prove (\ref{eq:hppm2}).
Consider an arbitrary $L(h,p,p)$-labelling $f$ of $\tm2$. Denote 
$$
f(u_i) = a_i,\; 0 \le i \le m; \quad f(u_{ij}) = a_{ij},\;1 \le i, j \le m.
$$ 
Since $u_1, \ldots, u_m$ are symmetric, and $u_{11}, \ldots, u_{1m}$ are symmetric, without loss of generality we may assume 
$$
a_1 < \cdots < a_m; \quad a_{11} < \cdots < a_{1m}.
$$ 
Then $\min f(N(u_1)) =  \min\{a_0, a_{11}\}$ and $\max f(N(u_1)) = \max\{a_0, a_{1m}\} \ne a_1$. 

\medskip
\textsf{Case 1:}~$a_1 >  \max\{a_0, a_{1m}\}$.~Since $u_1$ is adjacent to $u_0$ and $u_{1m}$, we have $|a_1 - a_0| \ge h$ and $|a_1 - a_{1m}| \ge h$, which together with our assumption $a_1 > \max\{a_0, a_{1m}\}$ imply $a_1 \ge  \max\{a_0, a_{1m}\} + h$. Since $|N(u_1)| = m+1$ and any two labels in $f(N(u_1))$ should differ by at least $p$, we have $\max\{a_0, a_{1m}\} \ge mp$ and so $a_1 \ge h+mp$. Since $\min f(N(u_0)) = a_1 >  \max\{a_0, a_{1m}\}$ and any two labels in $f(N(u_0))$ differ by at least $p$, from $a_1 \ge h+mp$ we obtain $a_m \ge h + (2m-1)p$. Therefore, $\sp(f) \ge h + (2m-1)p$.

\medskip
\textsf{Case 2:}~$a_1 <  \max\{a_0, a_{1m}\}$.~Since any two vertices in $N(u_0) \cup N(u_1)$ are within distance three apart in $\tm2$, the labels assigned to the $2m+1$ vertices of $N(u_0) \cup N(u_1)$ should pairwise differ by at least $p$. Moreover, since $u_1$ is adjacent to $u_0, u_{11}, \ldots, u_{1m}$, the difference between $a_1$ and each label among $a_0, a_{11}, \ldots, a_{1m}$ that is closest to $a_1$ should be at least $h$. Since there is at least one such label closest to $a_1$, we have $\sp(f) \ge h + 2mp > h + (2m-1)p$, no matter whether $a_{1m} < a_1 < a_0$, $a_0 < a_1 < a_{1m}$, or $a_1 < a_0$ and $a_1 < a_{1m}$.

In summary, we have proved $\sp(f) \ge h + (2m-1)p$ for any $L(h,p,p)$-labelling $f$ of $\tm2$. Hence (\ref{eq:hppm2}) follows. 

(b) Similar to (a), one can prove $\l_{h,p,p}(\hat{T}_{m, 2}) \ge h + 2mp$ for any $m \ge 2$.
This together with (\ref{eq:h11}) implies (\ref{eq:subtree1}). 
\end{proof}
\qed
 
\begin{corollary}
\label{thm:hpq}
Let $h \ge p \ge 1$ and $m, k \ge 2$ be integers. Then, for $T = \tmk$ ($k \ge 4$), $T_{m, \infty}$, $\hat{T}_{m, k}$ or $\hat{T}_{m, \infty}$, 
\be
\label{eq:hat}
\l_{h,p,p}(T) = \l^*_{h,p,p}(T) = h+2mp.
\ee
In addition, 
\begin{equation}
\label{eq:thmhpq}
\lambda_{h,p,p}(\tm2) = \lambda^*_{h,p,p}(\tm2) = h+(2m-1)p
\end{equation}
\begin{equation}
\label{eq:thmhpq1}
\l_{h,p,p}(T_{m, 3}) = \l^*_{h,p,p}(T_{m, 3}) = \max\{h + (2m-1)p, (2m+1)p\}.
\end{equation}
\end{corollary}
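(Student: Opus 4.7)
The plan is to dispatch the three equations (\ref{eq:hat}), (\ref{eq:thmhpq}) and (\ref{eq:thmhpq1}) separately, reducing all lower bounds to Theorems \ref{thm:h11} and \ref{core:h11} and supplying matching upper bounds via explicit elegant labellings.

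For (\ref{eq:hat}), the key point is that each of the four trees listed has $\De = m+1$ and contains $\hat{T}_{\De-1, 2} = \hat{T}_{m, 2}$ as a subtree. In $\hat{T}_{m,k}$ ($k \ge 2$) and $\hat{T}_{m,\infty}$ this is immediate by taking the $2$-ball around any vertex. For $T_{m,k}$ with $k \ge 4$ or $T_{m,\infty}$, take a vertex $v$ at level~$2$: $v$ has $m$ children at level~$3$, each of which has $m$ further descendants at level~$4$ (present because $k\ge 4$), while $v$'s parent at level~$1$ has degree $m+1$ in the ambient tree consisting of the root together with $v$ and $m-1$ siblings of $v$. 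Theorem \ref{core:h11}(b) then yields $\l_{h,p,p}(T) = \l^*_{h,p,p}(T) = h + 2mp$.

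For (\ref{eq:thmhpq}), the lower bound $h+(2m-1)p$ is exactly inequality (\ref{eq:hppm2}), proved inside Theorem \ref{core:h11}(a). For the matching upper bound I would exhibit the elegant labelling
$$
f(u_0) = mp,\qquad f(u_i) = h+mp+(i-1)p,\qquad f(u_{ij}) = (j-1)p,
$$
for $1\le i,j\le m$. A short case check verifies the $L(h,p,p)$ conditions, and the intervals $I_{u_0}=[h+mp,\,h+(2m-1)p]$, $I_{u_i}=[0,mp]$ and $I_{u_{ij}}=\{f(u_i)\}$, viewed modulo $h+(2m-1)p+1$, are pairwise disjoint across every edge; this together with (\ref{eq:elegant1}) forces $\l_{h,p,p}(\tm2)=\l^*_{h,p,p}(\tm2)=h+(2m-1)p$.

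For (\ref{eq:thmhpq1}), the lower bound combines $\l_{h,p,p}(T_{m,3}) \ge \l_{h,p,p}(\tm2) \ge h+(2m-1)p$ from (\ref{eq:hppm2}) with $\l_{h,p,p}(T_{m,3}) \ge (\De_2 - 1)p = (2m+1)p$ from Theorem \ref{thm:h11}, since a heavy edge of $T_{m,3}$ joins two internal vertices of degree $m+1$. For the upper bound I would extend the labelling of (\ref{eq:thmhpq}): when $h \ge 2p$, place each $f(u_{ijk})$ in the upper band $\{h+(j-1)p,\,h+jp,\,\ldots,\,h+(2m-1)p\}\setminus\{f(u_i)\}$, ensuring siblings remain $p$-apart and enlarging $I_{u_{ij}}$ to include $f(u_i)$ along with the new labels; the other distance-$\le 3$ constraints (to $u_0, u_i, u_{il}$) are verified by the band separation. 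The main obstacle is the regime $p \le h < 2p$, where the span is forced up to $(2m+1)p$ while $h$ is too small to separate two clean bands; there one must redesign the labelling so that grandchildren and great-grandchildren coexist in a common window of length $(2m+1)p+1$, with each neighbourhood still a consecutive block on the cyclic label set (the $m=2,\,h=p=1$ labelling with $f(u_0)=0$, $\{f(u_1),f(u_2)\}=\{1,2\}$ and $\{f(u_{ij})\}=\{4,5\}$ modulo $6$ illustrates the kind of interleaving needed). Once an elegant labelling of the correct span is produced in both regimes, (\ref{eq:elegant1}) again forces equality of $\l_{h,p,p}(T_{m,3})$ and $\l^*_{h,p,p}(T_{m,3})$ at $\max\{h+(2m-1)p,(2m+1)p\}$.
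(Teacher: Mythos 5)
Your handling of (\ref{eq:hat}) and (\ref{eq:thmhpq}) is correct and essentially the paper's route: the lower bounds come from Theorem \ref{core:h11} (via (\ref{eq:hppm2}) and the containment of $\hat{T}_{m,2}$, which you verify correctly, including why $k\ge 4$ is needed for $T_{m,k}$), and your explicit labelling $f(u_0)=mp$, $f(u_i)=h+mp+(i-1)p$, $f(u_{ij})=(j-1)p$ of $T_{m,2}$ is a valid elegant $L(h,p,p)$-labelling of span $h+(2m-1)p$ (the paper instead reuses Algorithm \ref{alg:lambda} with one label of $A_1$ deleted; the two are interchangeable).

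The gap is in the upper bound of (\ref{eq:thmhpq1}). In the regime $h\ge 2p$ your prescription fails as stated: the band $\{h+(j-1)p,\ldots,h+(2m-1)p\}$ for the children of $u_{ij}$ is \emph{not} separated from $f(u_0)=mp$ or from the uncle labels $f(u_{il})=(l-1)p$ unless $h>(m-j+1)p$. Concretely, with $m=3$, $p=1$, $h=2$, $j=1$ the band contains the label $3=f(u_0)$, yet every child of $u_{i1}$ is at distance three from $u_0$ and must differ from it by at least $p$; band elements can likewise land within $p$ of $f(u_{il})$ for $l>j$, and the enlarged $I_{u_{ij}}$ then meets $I_{u_i}=[0,mp]$, breaking elegance. (One can salvage this by a counting argument after excluding the offending labels, but you assert the constraints hold ``by the band separation'', which is false; the clean fix is the paper's choice of the $j$-independent top band $\{h+(m-1)p,\ldots,h+(2m-1)p\}\setminus\{f(u_i)\}$, for which $h\ge 2p$ gives separation $h-p\ge p$ from $mp$ and $h\ge p$ from all level-two labels, and elegance is immediate.) More seriously, in the regime $p\le h<2p$, where the optimum is $(2m+1)p$, you give no construction at all — only the remark that a ``redesign'' is needed and a two-level toy example — so the matching upper bound is not established there. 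The paper closes this case with the explicit elegant labelling $f(u_0)=mp$, $f(u_i)=(m+i+1)p$, $f(C(u_i))=\{0,p,\ldots,(m-1)p\}$, $f(C(u_{ij}))=\{(m+1)p,\ldots,(2m+1)p\}\setminus\{(m+i+1)p\}$, whose distance-one separations are at least $2p\ge h$ precisely because $h\le 2p$. Until you supply such a labelling (and repair the $h\ge 2p$ band), equation (\ref{eq:thmhpq1}) is not proved.
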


\begin{proof}
Note that in each case $T$ contains $\hat{T}_{m, 2}$ as a subtree. (We need $k \ge 4$ for $\tmk$ to contain $\hat{T}_{m, 2}$.) Thus by (b) of Theorem \ref{core:h11}, $h+2mp = \l_{h,p,p}(\hat{T}_{m, 2}) \le \l_{h,p,p}(T) \le \l^*_{h,p,p}(T)$. This together with the upper bound in (\ref{eq:h11}) yields (\ref{eq:hat}). 

When applying Algorithm \ref{alg:lambda} to $\tm2$, in step 2  we only need labels in $A_1 \setminus \{h+2mp\}$ for the children of $u_0$. Hence $\l^*_{h,p,p}(\tm2) \le h+(2m - 1)p$. This together with (\ref{eq:hppm2}) implies (\ref{eq:thmhpq}). 

Now we prove (\ref{eq:thmhpq1}). Since the $2m+2$ vertices in $N(u_1) \cup N(u_{11})$ have mutual distance at most $3$, we have $\l_{h,p,p}(T_{m, 3}) \ge (2m+1)p$. We also have $\l_{h,p,p}(T_{m, 3}) \ge \l_{h,p,p}(\tm2) = h + (2m-1)p$ by (\ref{eq:thmhpq}). If $h \ge 2p$, then we assign: $mp$ to $u_0$; $h+(m+i-1)p$ to $u_i$, $1 \le i \le m$; distinct labels from $\{0, p, \ldots, (m-1)p\}$ to the children of $u_i$, $1 \le i \le m$; and distinct labels from $\{h+(m-1)p, h+mp, \ldots, h+(2m-1)p\} \setminus \{h+(m+i-1)p\}$ to the children of $u_{ij}$, $1 \le i, j \le m$. Since $h \ge 2p$, this defines an elegant $L(h, p, p)$-labelling of $T_{m, 3}$ and so $\l^*_{h,p,p}(T_{m, 3}) \le h + (2m-1)p$. Hence $\l_{h,p,p}(T_{m, 3}) = \l^*_{h,p,p}(T_{m, 3}) = h + (2m-1)p$. If $h \le 2p$, then we assign: $mp$ to $u_0$; $(m+i+1)p$ to $u_i$, $1 \le i \le m$; distinct labels from $\{0, p, \ldots, (m-1)p\}$ to the children of $u_i$, $1 \le i \le m$; and distinct labels from $\{(m+1)p, (m+2)p, \ldots, (2m+1)p\} \setminus \{(m+i+1)p\}$ to the children of $u_{ij}$, $1 \le i, j \le m$. Since this defines an elegant $L(h, p, p)$-labelling of $T_{m, 3}$, we have $\l^*_{h,p,p}(T_{m, 3}) \le (2m+1)p$, leading to $\l_{h,p,p}(T_{m, 3}) = \l^*_{h,p,p}(T_{m, 3}) = (2m+1)p$.
\end{proof}
\qed
 
\begin{remark}
\label{re:super}
{\em
By (\ref{eq:hat}), $\tmk$ ($k \ge 4$), $\hat{T}_{m, k}$, $T_{m, \infty}$ and $\hat{T}_{m, \infty}$ achieve the upper bound in (\ref{eq:h11}) as claimed earlier. This implies that for these trees Algorithm \ref{alg:lambda} produces optimal elegant $L(h,p,p)$-labellings as well as optimal $L(h,p,p)$-labellings. In the special case when $p=1$, these labellings $f$ are super elegant in the sense that $f(N(v))$ is an integer interval for every vertex $v$. 
}
\end{remark}


\section{$C(h,p,p)$-labellings}
\label{sec:cyclic}

A {\em circular interval} $\mod~\ell$ is defined as a set of integers of the form 
$$
[a, b]_{\ell} := \{a~(\mod~\ell), a+1~(\mod~\ell), \ldots, b~(\mod~\ell)\},
$$ 
where $a, b$ are integers possibly with $a \ge b$. For instance, $[1,3]_7 = \{1, 2, 3\}$, $[5,2]_7 = \{5, 6, 0, 1, 2\}$ and $[4,0]_7 = \{4, 5, 6, 0\}$ are all circular intervals $\mod~7$. 
A {\em $p$-set} $\mod~\ell$ is defined as a set of integers of the form 
$$
c + p[a, b]_{\ell} := \{c+ap~(\mod~\ell), c+(a+1)p~(\mod~\ell), \ldots, c+bp~(\mod~\ell)\},
$$ 
where $a, b$ and $c$ are integers.  

The following is the main result in this section. Note that in this theorem the gap $p-1$ between the upper and lower bounds is independent of $T$, and the lower bound does not require the condition $h \ge \De p$ as we will see in its proof.

\begin{theorem}
\label{thm:c}
Let $T$ be a finite tree with $\diam \ge 3$ and $\De \ge 3$, or an infinite tree with finite maximum degree $\De \ge 3$. Then, for any integers $h, p \ge 1$ such that $h \ge \De p$, we have
\be
\label{eq:c}
2h + (\De - 1)p \le \s_{h,p,p}(T) \le \s^*_{h,p,p}(T) \le 2h + \De p - 1.
\ee
In particular, if $T = \tmk, T_{m, \infty}, \hat{T}_{m, k}$ or $\hat{T}_{m, \infty}$ and $h \ge (m+1)p$, then
\be 
\label{eq:d}
2h+mp \le \s_{h,p,p}(T) \le \s^*_{h,p,p}(T) \le 2h+(m+1)p-1.
\ee
\end{theorem}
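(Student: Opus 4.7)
My plan is to prove the three inequalities separately. The middle one, $\s_{h,p,p}(T) \le \s^*_{h,p,p}(T)$, is immediate from (\ref{eq:elegant}). For the lower bound, I pick a vertex $z$ of maximum degree $\De$ and take any $C(h,p,p)$-labelling $f$ of $T$ with span $\ell$, assuming $f(z) = 0$ without loss of generality. Every neighbour of $z$ must then be assigned a label in the linear interval $[h, \ell - h]$ (to have cyclic distance at least $h$ from $0$), so the $\De$ labels of $N(z)$ have pairwise cyclic distance equal to their pairwise linear distance. The pairwise-$p$ constraint therefore forces $\ell - 2h \ge (\De - 1) p$. Notably this argument does not use the hypothesis $h \ge \De p$.

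For the upper bound I set $\ell = 2h + \De p - 1$ and construct an elegant $C(h,p,p)$-labelling by a rooted BFS. Root $T$ at an arbitrary $u_0$ with $f(u_0) = 0$, and set a shift $s(v) = 0$ at even levels and $s(v) = p - 1$ at odd levels. Require inductively that the labels of $N(v)$ form the $\De$-element set
\[
A_v = \{\, f(v) + h + s(v) + j p \pmod{\ell} : j = 0, 1, \dots, \De - 1 \,\}.
\]
A short calculation shows that for every non-root $v$ exactly one element of $A_v$ equals $f(\text{parent}(v))$ (call its index $j^*(v)$), so the children of $v$ can be assigned pairwise distinct indices from $\{0, 1, \dots, \De - 1\} \setminus \{j^*(v)\}$. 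Two key algebraic facts are the parity identity $s(u) + s(w) = p - 1$ for any grandparent-grandchild triple $(w, u, v)$, and the relation $j^*(v) = \De - 1 - j(\text{parent}(v) \to v)$ that makes the scheme globally consistent.

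Verifying the cyclic $C(h,p,p)$-conditions splits into cases. Distance-$1$ is direct. Distance-$2$ siblings give cyclic distance $|(j - j')p|_\ell \ge p$. Distance-$2$ grandparent-grandchild gives $f(v) - f(w) \equiv 2h + (j_u + j_v + 1)p - 1 \pmod{\ell}$, and excluding $j^*(v)$ from the children's indices prevents $j_u + j_v = \De - 1$, the unique value that would collapse this quantity to a multiple of $\ell$. The two distance-$3$ sub-cases (direct great-grandchild, and uncle-nephew) each reduce to showing that an expression of the form $|h + r p + \ve|_\ell$ with $\ve \in \{-1, 0, p-1\}$ and controlled integer $r$ is at least $p$; the hypothesis $h \ge \De p$ provides exactly the slack needed at the tightest values of $r$. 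Elegance is then free: set $I_v = [f(v) + h, f(v) - h]_{\ell}$, a circular arc of length $\De p$. Then $f(N(v)) \subseteq I_v$ by construction, and for any edge $uv$ the cyclic distance $|f(u) - f(v)|_\ell \ge h \ge \De p$ forces the two length-$\De p$ arcs $I_u$ and $I_v$ in a cycle of length $2h + \De p - 1$ to be disjoint.

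The special case (\ref{eq:d}) then follows immediately, since each of $\tmk$, $T_{m, \infty}$, $\hat{T}_{m, k}$, $\hat{T}_{m, \infty}$ has $\De = m + 1$ and the hypothesis $h \ge (m+1)p$ is precisely $h \ge \De p$. The main technical obstacle is the distance-$3$ bookkeeping: once the parity identity $s(u) + s(w) = p - 1$ and the exclusion of $j^*(v)$ are in place, each sub-case collapses to a one-line inequality whose tightest instance is exactly $h \ge \De p$.
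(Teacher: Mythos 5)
Your proposal is correct, and its skeleton matches the paper's: the lower bound argument (fix a degree-$\De$ vertex $z$, note $f(N(z))$ lies in the arc $[h,\ell-h]_{\ell}$ and needs pairwise separation $p$, so $\ell-2h\ge(\De-1)p$, with no use of $h\ge\De p$) is exactly the paper's, the middle inequality is quoted from the same place, and the upper bound is in both cases a top-down labelling of span $2h+\De p-1$ in which each vertex's neighbourhood receives labels forming (a subset of) a $p$-progression inside the arc antipodal to its own label, with elegance read off from those arcs. Where you differ is in how that labelling is produced: the paper reduces to $\hat{T}_{\De-1,\infty}$ and runs a flexible inductive existence argument (maintain ``$f(N(v))$ is a $p$-set $\mod\ \ell$'', show the arc $[f(v)+h,f(v)-h]_{\ell}$ of size $\De p$ always contains a suitable $p$-set through the parent's label), whereas you give a closed-form scheme $A_v=\{f(v)+h+s(v)+jp\}$ with the alternating shift $s(v)\in\{0,p-1\}$ and the excluded index $j^*(v)=\De-1-j$, verified by explicit case analysis over distances $1,2,3$; I checked the arithmetic and each case does reduce to an inequality whose tight instance is $h\ge\De p$, so your deterministic construction is a valid (and directly-on-$T$) instantiation of what the paper obtains by counting available labels. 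Three cosmetic slips, none a gap: the neighbour labels should be said to be \emph{contained in} $A_v$ rather than to equal it (non-maximum-degree vertices); in the grandparent--parent--child case the excluded index is $j^*$ of the \emph{middle} vertex, not of the grandchild; and in the lower bound the claim that cyclic and linear distances coincide on $[h,\ell-h]$ is unnecessary (and can fail when $\ell-2h>\ell/2$) --- only the trivial inequality ``cyclic $\le$ linear'' is needed, so the conclusion stands.
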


\begin{figure}[ht]
\centering
\includegraphics*[height=9cm]{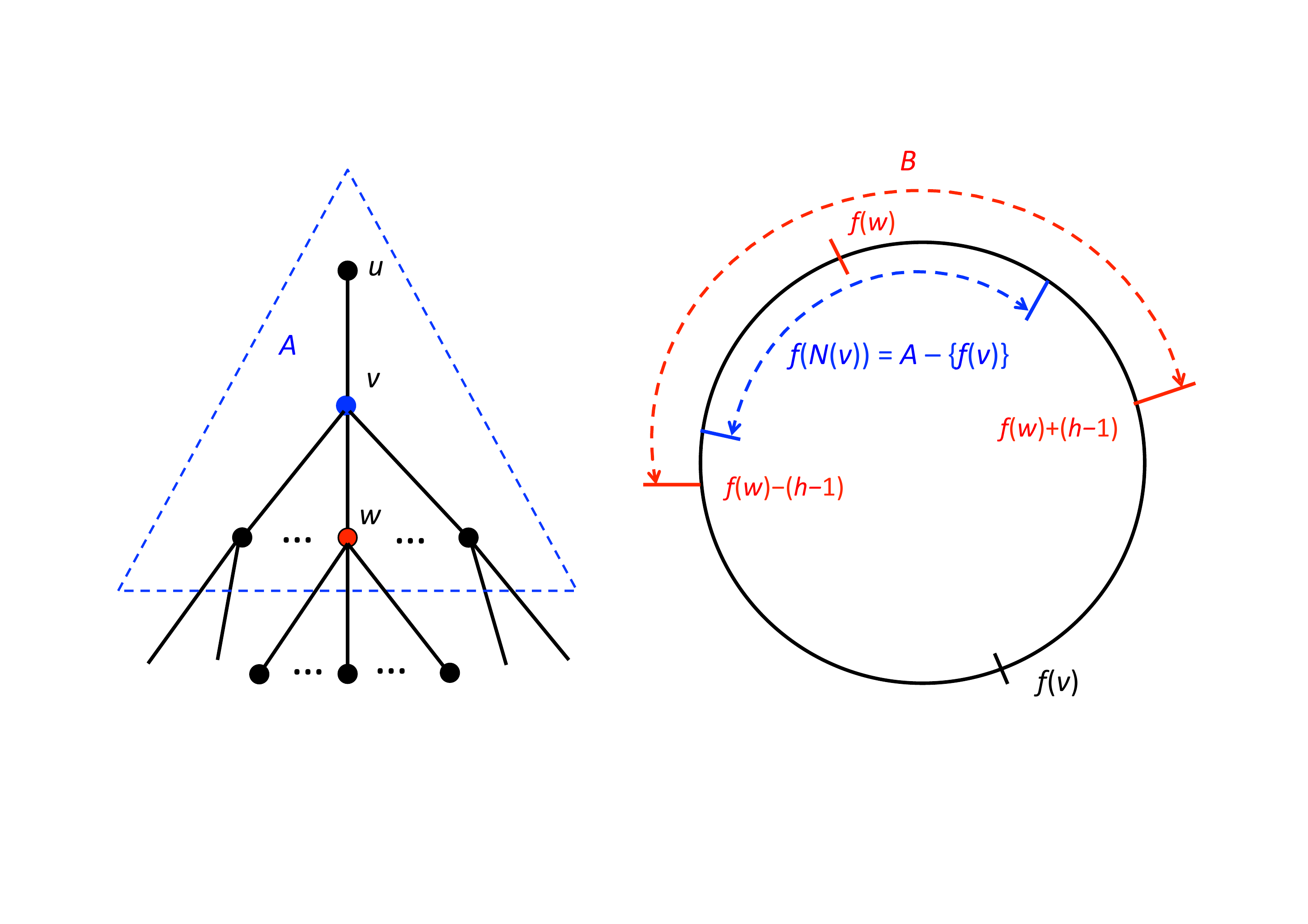}
\vspace{-2.0cm}
\caption{\small Proof of Theorem \ref{thm:c}, where $u=u_{i_1 \ldots i_{t-2}}, v=u_{i_1 \ldots i_{t-1}}$ and $w=u_{i_1 \ldots i_{t}}$.} \label{fig:ABC}
\end{figure} 

\begin{proof}
We prove the lower bound first. Let $u$ be a vertex of $T$ with $d(u) = \De$. Consider a $C(h,p,p)$-labelling $f$ of $T$ and suppose $f$ has span $\ell$. Without loss of generality we may assume $f(u) = 0$. Then $f(N(u)) \subseteq [h, \ell-h]_{\ell}$. Since the $\De$ vertices in $N(u)$ need labels with mutual separation at least $p$, we have $h + (\De - 1)p \le \ell - h$, that is, $\ell \ge 2h+(\De-1)p$. Since this holds for every $C(h,p,p)$-labelling of $T$, the lower bound in (\ref{eq:c}) follows. 

The inequality in the middle of (\ref{eq:c}) follows from (\ref{eq:elegant}). 
To prove the upper bound in (\ref{eq:c}), it suffices to prove that, if $m \ge 2$ and $h \ge (m+1)p$, then
\be 
\label{hat4}
\s^*_{h,p,p}(\hat{T}_{m, \infty}) \le 2h+(m+1)p-1.
\ee
In fact, since $T$ has maximum degree $\De$, it can be viewed as a subtree of $\hat{T}_{\De-1, \infty}$ (with root at any chosen vertex). Hence, if $h \ge \De p$, then $\s^*_{h,p,p}(T) \le \s^*_{h,p,p}(\hat{T}_{\De-1, \infty}) \le 2h + \De p - 1$ by (\ref{hat4}). 

Set $\ell: = 2h+(m+1)p-1$ in the rest of this proof. 
We now prove (\ref{hat4}) under the assumption $h \ge (m+1)p$ by constructing an elegant $C(h,p,p)$-labelling $f: V(\hat{T}_{m, \infty}) \rightarrow \{0, 1, \ldots, \ell - 1\}$ of $\hat{T}_{m, \infty}$ in the following way. First, define
\begin{itemize}
\item $f(u_0) = 0$;
\item $f(u_i) = h+(i-1)p$, $1 \le i \le m+1$;
\item $f(C(u_i)) = \left((2h+(p-1)) + p[i-1, m+i-1]_{\ell}\right) \setminus \{0\} = \{2h+ip-1, 2h+(i+1)p-1, \ldots, 2h+mp-1\} \cup \{p, 2p, \ldots, (i-1)p\}$, $1 \le i \le m+1$.
\end{itemize}
In the last step we can assign distinct labels from $\left((2h+(p-1)) + p[i-1, m+i-1]_{\ell}\right) \setminus \{0\}$ to the children of $u_i$ in any bijective manner. 
One can verify that the $C(h,p,p)$-conditions $\mod~\ell$ are satisfied among the vertices of $\hat{T}_{m, \infty}$ up to level 2. 

Note that $f(N(u_0))$ is a $p$-set $\mod~\ell$ and $f(N(u_i)) = f(C(u_i)) \cup \{0\}$ is a $p$-set $\mod~\ell$ for $1 \le i \le m+1$. Assume inductively that, for some $t \ge 2$, all vertices of $\hat{T}_{m, \infty}$ up to level $t$ have been labelled such that:
\begin{itemize}
\item[] $P_t$: the $C(h,p,p)$-conditions $\mod~\ell$ are satisfied among vertices up to level $t$;  
\item[] $Q_t$: for every vertex $u_{i_1 \ldots i_{t-1}}$ at level $t-1$, $f(N(u_{i_1 \ldots i_{t-1}}))$ is a $p$-set $\mod~\ell$. 
\end{itemize}
Based on these we now prove that we can label all vertices of $\hat{T}_{m, \infty}$ at level $t+1$ such that $P_{t+1}$ and $Q_{t+1}$ are satisfied. 

As seen in the proof of Theorem \ref{thm:h11}, for distinct $u_{i_1 \ldots i_{t}}$ and $u_{j_1 \ldots j_{t}}$ at level $t$, we can label the vertices in $C(u_{i_1 \ldots i_{t}})$ and that in $C(u_{j_1 \ldots j_{t}})$ independently. The vertices in $C(u_{i_1 \ldots i_{t}})$ are pairwise distance two apart. Apart from these vertices, the only other vertices which are within distance three from the vertices in $N(u_{i_1 \ldots i_{t}})$ are $u_{i_1 \ldots i_{t}}$, $u_{i_1 \ldots i_{t-1}}$ and the vertices in $N(u_{i_1 \ldots i_{t-1}}) \setminus \{u_{i_1 \ldots i_{t}}\}$.

Denote 
\be
\label{eq:defA}
A := f(N(u_{i_1 \ldots i_{t-1}})) \cup \{f(u_{i_1 \ldots i_{t-1}})\},
\ee
\be
\label{eq:defB}
B := [f(u_{i_1 \ldots i_{t}})-(h-1), f(u_{i_1 \ldots i_{t}})+(h-1)]_{\ell}.
\ee
(See Figure \ref{fig:ABC} for an illustration.)
In view of the discussion in the previous paragraph, a label in $\{0, 1, \ldots, \ell - 1\}$ can be used by a vertex of $C(u_{i_1 \ldots i_{t}})$ if and only if it is not in $A \cup B$ and moreover its $\ell$-cyclic distance to every element of $A$ is at least $p$. Since (i) $f(N(u_{i_1 \ldots i_{t-1}}))$ is a $p$-set $\mod~\ell$ with length $m+1$ (by assumption $Q_t$), (ii) $f(u_{i_1 \ldots i_{t}})$ is contained in $f(N(u_{i_1 \ldots i_{t-1}}))$ and is the centre of $B$, and (iii) $h \ge (m+1)p$, we have $f(N(u_{i_1 \ldots i_{t-1}}))  \subseteq [f(u_{i_1 \ldots i_{t}})-mp, f(u_{i_1 \ldots i_{t}})+mp]_{\ell} \subseteq B$ regardless of the location of $f(u_{i_1 \ldots i_{t}})$ in $f(N(u_{i_1 \ldots i_{t-1}}))$. Since $h \ge (m+1)p$, the $\ell$-cyclic distance between each of $f(u_{i_1 \ldots i_{t}})-mp$, $f(u_{i_1 \ldots i_{t}})+mp$ and each of $f(u_{i_1 \ldots i_{t}})-h$, $f(u_{i_1 \ldots i_{t}})+h$ is at least $p$. Therefore, every label in $[f(u_{i_1 \ldots i_{t}})+h, f(u_{i_1 \ldots i_{t}})-h]_{\ell}$ whose $\ell$-cyclic distance to $f(u_{i_1 \ldots i_{t-1}})$ is no less than $p$ can be used by a vertex in $C(u_{i_1 \ldots i_{t}})$. Note that this circular interval contains $\ell - (2h-1) = (m+1)p$ labels including $f(u_{i_1 \ldots i_{t-1}})$. The size $(m+1)p$ is large enough to guarantee the existence of a subset $Y$ of $[f(u_{i_1 \ldots i_{t}})+h, f(u_{i_1 \ldots i_{t}})-h]_{\ell} \setminus \{f(u_{i_1 \ldots i_{t-1}})\}$ with size $m$ such that $Y \cup \{f(u_{i_1 \ldots i_{t-1}})\}$ is a $p$-set $\mod~\ell$, regardless of the location of $f(u_{i_1 \ldots i_{t-1}})$. Assign the $m$ labels in $Y$ to the $m$ vertices of $C(u_{i_1 \ldots i_{t}})$ in a one-to-one manner, so that $f(N(u_{i_1 \ldots i_{t}})) = Y \cup \{f(u_{i_1 \ldots i_{t-1}})\}$ is a $p$-set $\mod~\ell$. Since this can be carried out independently for distinct vertices $u_{i_1 \ldots i_{t}}$ at level $t$, we have proved that we can label all vertices of $\hat{T}_{m, \infty}$ at level $t+1$ such that $P_{t+1}$ and $Q_{t+1}$ hold. 

By induction, we conclude that $\hat{T}_{m, \infty}$ admits a $C(h,p,p)$-labelling of span $\ell$ with properties $P_t$ and $Q_t$ for all $t \ge 1$. Let $I_{u_{i_1 \ldots i_{t-1}}}$ be the shortest circular interval $\mod~\ell$ that contains the $p$-set $f(N(u_{i_1 \ldots i_{t-1}}))$ $\mod~\ell$, where $t \ge 1$. Then $I_{u_{i_1 \ldots i_{t-1}}}$ has the same ends as $f(N(u_{i_1 \ldots i_{t-1}}))$, and as such the proof above shows that for any $u_{i_1 \ldots i_{t}} \in C(u_{i_1 \ldots i_{t-1}})$, $I_{u_{i_1 \ldots i_{t-1}}} \cap I_{u_{i_1 \ldots i_{t}}} = \emptyset$. Thus the $C(h,p,p)$-labelling constructed above is an elegant $C(h,p,p)$-labelling of $\hat{T}_{m, \infty}$ with span $\ell$. Therefore, $\s^*_{h,p,p}(\hat{T}_{m, \infty}) \le \ell$ as claimed in (\ref{hat4}), and this completes the proof of (\ref{eq:c}). 

Since $\tmk$, $T_{m, \infty}$, $\hat{T}_{m, k}$ and $\hat{T}_{m, \infty}$ all have $\De = m+1$, (\ref{eq:d}) follows from (\ref{eq:c}) immediately.  
\end{proof}
\qed
 
An immediate consequence of Theorem \ref{thm:c} is the following result. 

\begin{corollary}
\label{core:c}
Let $T$ be a finite tree with $\diam \ge 3$ and $\De \ge 3$, or an infinite tree with finite maximum degree $\De \ge 3$. Then, for any integer $h \ge \De$,  
\be
\label{eq:c1}
\s_{h,1,1}(T) = \s^*_{h,1,1}(T) = 2h + \De - 1,
\ee
and for finite trees $T$ there exists a linear time exact algorithm for computing $\s_{h,1,1}(T)$. In particular, for $T = \tmk, T_{m, \infty}, \hat{T}_{m, k}$ or $\hat{T}_{m, \infty}$, and any $h \ge m+1$, 
\be
\label{eq:c2}
\s_{h,1,1}(T) = \s^*_{h,1,1}(T) = 2h + m.
\ee
\end{corollary}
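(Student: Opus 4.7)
The plan is to obtain Corollary \ref{core:c} as a direct specialization of Theorem \ref{thm:c} at $p = 1$. In (\ref{eq:c}) the lower and upper bounds are $2h + (\De - 1)p$ and $2h + \De p - 1$ respectively, separated by a gap of $p - 1$. Substituting $p = 1$ collapses this gap to zero: the hypothesis $h \ge \De p$ becomes $h \ge \De$, and the chain forces $\s_{h,1,1}(T) = \s^*_{h,1,1}(T) = 2h + \De - 1$, which is (\ref{eq:c1}). Equation (\ref{eq:c2}) then follows immediately because each of $\tmk$, $T_{m,\infty}$, $\hat{T}_{m,k}$, $\hat{T}_{m,\infty}$ has maximum degree $\De = m + 1$, so $h \ge m + 1$ is exactly $h \ge \De$ and the common value $2h + \De - 1$ equals $2h + m$.

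For the linear-time exact algorithm on a finite tree $T$ with $n$ vertices, first compute $\De(T)$ in $O(n)$ time by a single scan of the edge list; this alone returns the optimal value $\s_{h,1,1}(T) = 2h + \De - 1$. To produce a witnessing optimal labelling, regard $T$ as a subtree of $\hat{T}_{\De - 1, \infty}$ and execute the recursive construction from the proof of Theorem \ref{thm:c} (specialized to $p = 1$) in BFS order from any chosen root $u_0$: set $f(u_0) = 0$, assign $f(u_i) = h + i - 1$ to the children of $u_0$, and inductively at each visited vertex $v$ with parent $v'$ pick labels for the children of $v$ from the prescribed circular interval of $\De$ consecutive residues modulo $\ell = 2h + \De - 1$, avoiding the forbidden position $f(v')$. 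Each vertex is handled in $O(d(v))$ time, and since $\sum_{v \in V(T)} d(v) = 2(n - 1)$, the total running time is $O(n)$.

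The main obstacle is essentially absent here: Theorem \ref{thm:c} has already done the heavy lifting, and at $p = 1$ its upper bound construction is both linear in $n$ and tight against the lower bound. The only minor point that warrants a brief verification is that the inductive step from the proof of (\ref{hat4}) remains feasible when $T$ is a proper subtree of $\hat{T}_{\De - 1, \infty}$ rather than the full regular tree, but this is immediate because at each vertex we merely select $d(v) - 1 \le \De - 1$ child labels from an interval of $\De$ admissible positions; dropping the unused slots does not affect the $C(h, 1, 1)$-conditions modulo $\ell$.
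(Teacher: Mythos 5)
Your proposal is correct and follows essentially the same route as the paper: equations (\ref{eq:c1}) and (\ref{eq:c2}) are read off from Theorem \ref{thm:c} at $p=1$ (where the gap $p-1$ vanishes and $\De = m+1$ for the four special trees), and the linear-time exact algorithm is exactly the paper's, namely running the recursive construction from the proof of (\ref{hat4}) specialized to $p=1$ on $T$ viewed as a subtree of $\hat{T}_{\De-1,\infty}$, with the same observation that labelling only $d(v)-1 \le \De-1$ children from the $\De$ available positions causes no difficulty. Your extra remarks (computing $\De$ in $O(n)$ time and the $\sum_v d(v) = 2(n-1)$ accounting) only make explicit what the paper leaves implicit.
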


\begin{proof}
The truth of (\ref{eq:c1}) and  (\ref{eq:c2}) follows from (\ref{eq:c}) immediately. Following the proof of (\ref{hat4}) and setting $\ell := 2h+\De- 1$, we apply the following algorithm to obtain $f: V(T) \rightarrow \{0, 1, \ldots, \ell - 1\}$:  
\begin{itemize}
\item $f(u_0) = 0$;
\item $f(u_i) = h+i-1$, $1 \le i \le d(u_0)$;
\item assign distinct labels from $\left(2h + [i-1, \De+i-2]_{\ell}\right) \setminus \{0\}$ to the vertices in $f(C(u_i))$, $1 \le i \le d(u_0)$;
\item if for some $t$ with $1 \le t < k$, all vertices of $T$ up to level $t$ have been labelled, then for every vertex $u_{i_1 \ldots i_t}$ of $T$ at level $t$, choose a subset $Y$ of $[f(u_{i_1 \ldots i_t})+h, f(u_{i_1 \ldots i_t})-h]_{\ell} \setminus \{f(u_{i_1 \ldots i_{t-1}})\}$ with size $d(u_{i_1 \ldots i_t})$ such that $Y \cup \{f(u_{i_1 \ldots i_{t-1}})\}$ is a circular interval $\mod~\ell$, and assign distinct labels from $Y$ to the children of $u_{i_1 \ldots i_t}$.
\end{itemize}
It follows from the proof of (\ref{hat4}) that such a set $Y$ always exists and $f$ is an elegant $C(h, 1, 1)$-labelling of $T$ with span $\ell$. Thus, in view of (\ref{eq:c1}), the above is an exact algorithm for producing an optimal $C(h, 1, 1)$-labelling of $T$ in linear time. 
\end{proof}
\qed

\begin{remark}
\label{re:c}
{\em (a) The algorithm in the proof above produces a $C(h, 1, 1)$-labelling $f$ that is both optimal and super elegant in the sense that $f(N(v))$ is a circular interval $\mod~\ell$ for every vertex $v$. In particular, it gives an optimal and super elegant $C(h, 1, 1)$-labelling of $\tmk, T_{m, \infty}, \hat{T}_{m, k}$ or $\hat{T}_{m, \infty}$ when applied to these graphs. 

(b) The algorithm in the proof of Theorem \ref{thm:c} produces an elegant $C(h, p, p)$-labelling $f$ of $\hat{T}_{m, \infty}$ under the condition $h \ge (m+1)p$. Since any finite or infinite tree $T$ with $\diam \ge 3$ and $\De \ge 3$ is a subtree of $\hat{T}_{m, \infty}$ where $m = \De-1$, the restriction of this algorithm to $T$ (that is, the restriction of $f$ to $V(T)$) produces an elegant $C(h, p, p)$-labelling of $T$ whose span is the upper bound in (\ref{eq:c}).

(c) The upper bounds in (\ref{eq:c}) and (\ref{eq:d}) are equivalent as one implies the other.}
\end{remark}

Note that we require $p=1$ in (\ref{eq:c2}). When $p > 1$ and $h$ is sufficiently large, we are able to determine the values of $\s_{h,p,p}$ and $\s^*_{h,p,p}$ for $\tm2$ and $\hat{T}_{m, 2}$, as stated in the next proposition. 

\begin{proposition}
\label{thm:d}
Let $h, p \ge 1$ and $m \ge 2$ be integers. If $h \ge mp$, then 
\be
\label{eq:b}
\s_{h,p,p}(\tm2) = \s^*_{h,p,p}(\tm2) = 2h+mp;
\ee
if $h \ge (m+1)p$, then
\be
\label{eq:b1}
\s_{h,p,p}(\hat{T}_{m, 2}) = \s^*_{h,p,p}(\hat{T}_{m, 2}) = 2h+mp. 
\ee
\end{proposition}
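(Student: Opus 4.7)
The plan is to establish $\s_{h,p,p}(T) = \s^*_{h,p,p}(T) = 2h+mp$ by sandwiching $\s_{h,p,p}(T) \le \s^*_{h,p,p}(T) \le 2h+mp \le \s_{h,p,p}(T)$ for each of $T = \tm2$ and $T = \hat{T}_{m,2}$. The lower bound $\s_{h,p,p}(T) \ge 2h+mp$ requires no work beyond Theorem~\ref{thm:c}: both trees satisfy $\De = m+1$, and the lower-bound argument in the proof of that theorem (which, importantly, does not invoke the upper-bound hypothesis $h \ge \De p$) yields $\s_{h,p,p}(T) \ge 2h + (\De - 1)p$ at a vertex of maximum degree. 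Together with (\ref{eq:elegant}), everything reduces to proving $\s^*_{h,p,p}(T) \le 2h+mp$ under the stated hypotheses.

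For the upper bound I would set $\ell := 2h+mp$ and construct an elegant $C(h,p,p)$-labelling $f$ by taking $f(u_0) = 0$, $f(u_i) = h + (i-1)p$ for $1 \le i \le d(u_0)$, and $f(C(u_i))$ equal to the $m$-element set
\begin{equation*}
\{2h + (i-1+j)p \bmod \ell : 0 \le j \le m\} \setminus \{0\}.
\end{equation*}
Geometrically, the allowed circular interval $[2h+(i-1)p, (i-1)p]_\ell$ has length exactly $mp + 1$, just enough for an $(m+1)$-element $p$-set; this $p$-set contains the single forbidden value $0 = f(u_0)$ at index $j = m - i + 1$, and removing it leaves $m$ distinct nonzero labels to assign in any bijection to $C(u_i)$.

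The distance-one and distance-two conditions are built into the construction: the allowed interval enforces $|f(u_i) - f(u_{ij})|_\ell \ge h$, and within any $N(u_i)$ the chosen labels together with $0$ form a $p$-set $\bmod~\ell$. The main obstacle is the distance-three condition between $f(C(u_i))$ and $f(u_k)$ for $k \ne i$. Writing the cyclic distance as $|h + tp|_\ell = \min(h + tp,\, h + (m-t)p)$ with $t = i + j - k$, this is $\ge p$ iff $t \in [1 - h/p,\, m-1 + h/p]$. A direct case check of admissible $(i, j, k)$ shows that $t \in [1-m,\, 2m-1]$ in $\tm2$, forcing the hypothesis $h \ge mp$; while in $\hat{T}_{m,2}$ the extra child of the root enlarges the range to $t \in [-m,\, 2m]$, forcing $h \ge (m+1)p$. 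These are exactly the hypotheses of the proposition and explain why the two bounds require different thresholds on $h$.

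Finally, elegance will be witnessed by $I_{u_0} = [h,\, h+(d(u_0)-1)p]_\ell$, $I_{u_i} = [2h+(i-1)p,\, (i-1)p]_\ell$, and $I_{u_{ij}} = \{f(u_i)\}$. Pairwise disjointness for the edges $u_0 u_i$ and $u_i u_{ij}$ reduces to the inequalities $(d(u_0)-i)p < h$ and $(i-1)p < h$, both of which follow from $h \ge mp$ (resp.\ $h \ge (m+1)p$), completing the argument.
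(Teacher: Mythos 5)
Your proposal is correct, and for the heart of the proposition --- the upper bound --- it coincides with the paper's own proof: you use exactly the same labelling ($f(u_0)=0$, $f(u_i)=h+(i-1)p$, and $f(C(u_i))$ the nonzero part of the $p$-set $2h+p[i-1,m+i-1]_\ell$, handled uniformly for $d(u_0)=m$ and $d(u_0)=m+1$) and essentially the same elegance witnesses $I_{u_0}$, $I_{u_i}$, $I_{u_{ij}}$; your parametrisation of the distance-three check by $t=i+j-k$, with the criterion $t\in[1-h/p,\,m-1+h/p]$ against the ranges $[1-m,2m-1]$ and $[-m,2m]$, is just a tidier way of doing the verification the paper leaves as ``one can verify,'' and it correctly explains the two thresholds $h\ge mp$ and $h\ge(m+1)p$. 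The one genuine divergence is the lower bound for $T_{m,2}$ in the regime $mp\le h<(m+1)p$: the paper does not invoke (\ref{eq:c}) there (its packaged statement carries the hypothesis $h\ge\De p$) and instead rules out span $2h+mp-1$ by a separate structural argument starting from the root, whereas you reuse the max-degree argument from the proof of Theorem \ref{thm:c} at a level-one vertex of degree $m+1$. That shortcut is legitimate --- the paper itself remarks, just before Theorem \ref{thm:c}, that the lower bound of (\ref{eq:c}) does not require $h\ge\De p$ --- and it makes your treatment of the lower bound shorter than the paper's. Two cosmetic points: the disjointness $I_{u_i}\cap I_{u_{ij}}=\emptyset$ is automatic (the offending label sits at cyclic offset $h+mp>mp$ from the start of $I_{u_i}$), so only the edge $u_0u_i$ actually needs your two inequalities $(d(u_0)-i)p<h$ and $(i-1)p<h$; and when using $|h+tp|_\ell=\min(h+tp,\,h+(m-t)p)$ you should note (it follows at once from the hypotheses) that $0<h+tp<\ell$ throughout the admissible range of $t$, so the formula is valid.
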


\begin{proof}
Denote $\ell := 2h+mp$. 

(a) We first prove (\ref{eq:b}) under the assumption $h \ge mp$. We first show that $\ell$ is a lower bound for $\s_{h,p,p}(\tm2)$ under this condition. (Note that under the stronger condition $h \ge (m+1)p$ this follows from the lower bound in (\ref{eq:c}).) Suppose otherwise. Then $\tm2$ admits a $C(h,p,p)$-labelling $f: V(\tm2) \rightarrow \{0, 1, \ldots, \ell - 2\}$ with span $\ell - 1$. Without loss of generality we may assume $f(u_0) = 0$. Since $u_1, \ldots, u_m$ are adjacent to $u_0$, we have $f(u_i) \in [h, (\ell-1)-h]_{\ell-1}$ for $1 \le i \le m$. Moreover, since $u_1, \ldots, u_m$ are pairwise distance two apart, their labels should pairwise differ by at least $p$. However, $\{h, h+p, \ldots, h+(m-1)p\}$ is the only subset of $[h, (\ell-1)-h]_{\ell-1}$ whose elements mutually differ by at least $p$. Thus we must have $f(C(u_0)) = \{h, h+p, \ldots, h+(m-1)p\}$. Without loss of generality we may assume $f(u_i) = h+(i-1)p$, $1 \le i \le m$. Since $f(u_1) = h$, we have $f(C(u_1)) \subseteq [2h, \ell - 2]_{\ell-1}$. Moreover, since all vertices in $C(u_1)$ are distance two apart from $u_0$, the $(\ell-1)$-cyclic distances between their labels and $0$ must be at least $p$. Hence $f(C(u_1)) \subseteq [2h, (\ell-1) - p]_{\ell-1}$. Since the $m$ vertices in $C(u_1)$ are distance two apart, they should receive labels that have mutual $(\ell-1)$-cyclic distances at least $p$. However, since $(\ell -1 - p) - 2h = (m-1)p - 1 < mp$, there exists no subset of $[2h, (\ell - 1) - p]_{\ell-1}$ of size $m$ whose elements have mutual $(\ell-1)$-cyclic distances at least $p$. This contradiction shows that $\s_{h,p,p}(\tm2) \ge \ell$. 

Define $f: V(\tm2) \rightarrow \{0, 1, \ldots, \ell - 1\}$ such that
\begin{itemize}
\item $f(u_0) = 0$;
\item $f(u_i) = h+(i-1)p$, $1 \le i \le m$;
\item $f(C(u_i)) = (2h + p[i-1, m+i-1]_{\ell}) \setminus \{0\} = \{2h+(i-1)p, 2h+ip, \ldots, 2h+(m-1)p\} \cup \{p, 2p, \ldots, (i-1)p\}$, $1 \le i \le m$.
\end{itemize}
Using the condition $h \ge mp$, one can verify that the $\ell$-cyclic distance between $h+(i-1)p$ and every label in $f(C(u_i))$ is at least $h$, and the $\ell$-cyclic distance between every $h+(j-1)p$ with $1 \le j \le m$ and $j \ne i$ and every label in $f(C(u_i))$ is at least $p$. Thus $f$ is a $C(h,p,p)$-labelling of $\tm2$ with span $\ell$. Moreover, $f(N(u_0)) \subseteq I_0 := [h, h+(m-1)p]_{\ell}$, $f(N(u_i)) \subseteq I_i := [2h+p(i-1), 2h+p(m+i-1)]_{\ell}$, and $f(N(u_{ij})) = I_{ij} := \{h+(i-1)p\}$. It is evident that $I_0 \cap I_i = \emptyset$ and $I_i \cap I_{ij} = \emptyset$ for $1 \le i, j \le m$. Therefore, $f$ is an elegant $C(h,p,p)$-labelling of $\tm2$ and so $\s^*_{h,p,p}(\tm2) \le \ell$. Thus, by (\ref{eq:elegant}), $\ell \le \s_{h,p,p}(\tm2) \le \s^*_{h,p,p}(\tm2) \le \ell$, and (\ref{eq:b}) follows. 


(b) Suppose $h \ge (m+1)p$. Then we have $\s_{h,p,p}(\hat{T}_{m, 2}) \ge \ell$ by (\ref{eq:d}). Define $f: V(\hat{T}_{m, 2}) \rightarrow \{0, 1, \ldots, \ell - 1\}$ in exactly the same way as in (a) above except that $i$ now ranges from $1$ to $m+1$. Using the condition $h \ge (m+1)p$, one can verify that $f$ is an elegant $C(h,p,p)$-labelling of $\hat{T}_{m, 2}$ with span $\ell$. Therefore, $\s^*_{h,p,p}(\hat{T}_{m, 2}) \le \ell$ and (\ref{eq:b1}) follows. 
\end{proof}
\qed


\section{$C(h,1,1)$-labellings of trees}
\label{sec:cyc}

In this section we will first give the precise values of $\s_{h,1,1}(T)$ and $\s^*_{h,1,1}(T)$ for $T = \tmk$ ($k \ge 4$), $T_{m,\infty}$, $\hat{T}_{m, k}$, $\hat{T}_{m, \infty}$. Based on this we will then give a sharp upper bound on $\s^*(T)$ for general $T$ and any $h \ge 1$, and a $(4\De + 1)/(2\De + 4)$-approximation algorithm for the $C(h, 1, 1)$-labelling problem for finite trees. Alternatively, similar to what we did in the previous two sections, we could state our results for general trees first and then obtain the values of $\s_{h,1,1}$ and $\s^*_{h,1,1}$ for the four trees above as special cases. Nevertheless, the two treatments are equivalent, and for both the key is an algorithm that produces optimal `super elegant' $C(h, 1, 1)$-labellings of $T_{m, \infty}$ and $\hat{T}_{m, \infty}$. An upper bound on $\s^*(T)$ for general $T$ then follows by viewing $T$ as a subgraph of an appropriate $\hat{T}_{m, \infty}$, akin to the treatment in the proof of Theorem \ref{thm:c}.

\begin{proposition}
\label{thm:a}
Let $m \ge 2$ and $h \ge 1$ be integers. Then
\be 
\label{ck2}
\s_{h,1,1}(\tm2) = \s^*_{h,1,1}(\tm2) = \max\{h+2m, 2h+m\}
\ee
\be 
\label{hat2}
\s_{h,1,1}(\hat{T}_{m, 2}) = \s^*_{h,1,1}(\hat{T}_{m, 2}) = \max\{h+2m+1, 2h+m\}.
\ee
\end{proposition}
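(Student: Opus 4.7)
The plan is to establish lower bounds on $\s_{h,1,1}$ and matching upper bounds on $\s^*_{h,1,1}$; combined with (\ref{eq:elegant}) these yield both claimed equalities. For the lower bounds, both $\tm2$ and $\hat{T}_{m,2}$ contain a vertex $v$ of degree $m + 1$ (a level-$1$ vertex of $\tm2$; the root of $\hat{T}_{m,2}$). In any $C(h,1,1)$-labelling $f$ with span $\ell$, the $m + 1$ neighbours of $v$ receive distinct labels at cyclic distance $\ge h$ from $f(v)$; normalising $f(v) = 0$ places them in a cyclic interval of length $\ell - 2h + 1$ with pairwise separation $\ge 1$, forcing $\ell \ge 2h + m$. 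The remaining lower bounds $h + 2m$ (for $\tm2$) and $h + 2m + 1$ (for $\hat{T}_{m,2}$) follow from (\ref{eq:lvsc1}) applied to the exact values $\l_{h,1,1}(\tm2) = h + 2m - 1$ (from (\ref{eq:thmhpq}) with $p = 1$) and $\l_{h,1,1}(\hat{T}_{m,2}) = h + 2m$ (from Theorem~\ref{core:h11}(b) with $p = 1$).

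For the upper bounds it suffices to exhibit elegant $C(h,1,1)$-labellings of the required spans. When $h \ge m$, the formula for $\tm2$ equals $2h + m$, and Proposition~\ref{thm:d}, equation~(\ref{eq:b}), with $p = 1$ provides the labelling; similarly, when $h \ge m + 1$, equation~(\ref{eq:b1}) with $p = 1$ handles $\hat{T}_{m,2}$. In the complementary small-$h$ regimes ($h \le m - 1$ for $\tm2$ and $h \le m$ for $\hat{T}_{m,2}$) the maxima are $h + 2m$ and $h + 2m + 1$ respectively, and I will construct an elegant labelling uniformly. Writing $d$ for the root degree ($d = m$ for $\tm2$ and $d = m + 1$ for $\hat{T}_{m,2}$) and $\ell = h + m + d$, take $f(u_0) = 0$, $f(u_i) = h + i - 1$ for $i = 1, \ldots, d$, set $k_i := \min\{i - 1, h - 1\}$, and assign bijectively to $C(u_i)$ the $m$ labels
\[
\{h + d + k_i,\, h + d + k_i + 1,\, \ldots,\, h + m + d - 1\}\ \cup\ \{1,\, 2,\, \ldots,\, k_i\}.
\]
Elegance is witnessed by the intervals $I_{u_0} = [h, h + d - 1]$, $I_{u_i} = [h + d + k_i,\, k_i]_\ell$, and $I_{u_{ij}} = \{f(u_i)\}$; disjointness for adjacent pairs is immediate since $I_{u_0}$ and each $I_{u_i}$ lie in complementary cyclic arcs of $\mathbb{Z}/\ell$.

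The main technical obstacle is verifying that the proposed child-labels satisfy every $C(h,1,1)$-constraint. Two checks are central: (i) each chosen label is at cyclic distance $\ge h$ from $f(u_i) = h + i - 1$, i.e.~avoids the integer interval $[i,\, 2h + i - 2]$---the choice $k_i = \min\{i - 1, h - 1\}$ is precisely calibrated so that the upper block starts strictly above $2h + i - 2$ and the lower block ends strictly below $i$, after a brief case split on whether $i \le h$ or $i > h$; and (ii) each chosen label avoids $\{0\} \cup (\{h, h + 1, \ldots, h + d - 1\} \setminus \{f(u_i)\})$, the labels used by $u_0$ and the sibling branches, which is automatic because the two blocks lie strictly above $h + d - 1$ or strictly below $h$. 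Labels across distinct subtrees $C(u_i)$ and $C(u_j)$ are at tree-distance $4$ and hence carry no $C(h,1,1)$-constraint, so the per-subtree assignments are independent and the construction is complete.
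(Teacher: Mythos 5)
Your proposal is correct and follows essentially the same route as the paper: the large-$h$ cases are delegated to Proposition \ref{thm:d}, the lower bounds come from $\s_{h,1,1} \ge \l_{h,1,1}+1$ via (\ref{eq:lvsc1}) together with the known $\l_{h,1,1}$ values, and the small-$h$ upper bounds are obtained by an explicit elegant $C(h,1,1)$-labelling that assigns to each $C(u_i)$ a circular interval wrapping from the top labels into $\{1,\dots,k_i\}$. Your uniform formula $k_i=\min\{i-1,h-1\}$ is only a cosmetic variant of the paper's two-range assignment of circular intervals, and your constraint checks (avoiding $[i,2h+i-2]$ and the labels of $u_0$ and the $u_j$'s) verify correctly.
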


\begin{proof} 
(a) If $h \ge m$, then (\ref{ck2}) follows from (\ref{eq:b}). 

Assume $h \le m$. Let $\ell := h+2m$. By (\ref{eq:lvsc1}) and (\ref{eq:thmhpq}), $\s_{h,1,1}(\tm2) \ge \l_{h, 1, 1}(\tm2) + 1 = \ell$. Define $f: V(\tm2) \rightarrow \{0, 1, \ldots, \ell-1\}$ such that
\begin{itemize} 
\item $f(u_0) = 0$;
\item $f(u_i) = h+i-1$, $1 \le i \le m$;
\item $f(C(u_i)) = [h+m, h+2m-1]_{\ell}$, $1 \le i \le m-h+1$; 
\item $f(C(u_i)) = [2h+i-1, 2h+m+i-1]_{\ell} \setminus \{0\}$, $m-h+2 \le i \le m$.
\end{itemize}
Since $h \le m$, one can verify that $f$ is an elegant $C(h, 1, 1)$-labelling of $\tm2$ with span $\ell$. Thus, $\s^*_{h,1,1}(\tm2) \le \ell$, which together with $\ell \le \s_{h,1,1}(\tm2) \le \s^*_{h,1,1}(\tm2)$ implies (\ref{ck2}).

(b) If $h \ge m+1$, then (\ref{hat2}) follows from (\ref{eq:b1}). 

Assume $h \le m$. Set $\ell := h+2m+1$. Then $\s_{h,1,1}(\hat{T}_{m, 2}) \ge \l_{h,1,1}(\hat{T}_{m, 2}) + 1 = \ell$ by (\ref{eq:lvsc1}) and (\ref{eq:hat}). Define $f: V(\hat{T}_{m, 2}) \rightarrow \{0, 1, \ldots, \ell-1\}$ such that
\begin{itemize} 
\item $f(u_0) = 0$;
\item $f(u_i) = h+i-1$ for $1 \le i \le m+1$;
\item $f(C(u_i)) = [h+m+1, h+2m]_{\ell}$, $1 \le i \le m-h+2$;
\item $f(C(u_i)) = [2h+i-1, 2h+m+i-1]_{\ell} \setminus \{0\}$, $m-h+3 \le i \le m+1$.
\end{itemize}
Using the condition $h \le m$, one can show that $f$ is an elegant $C(h, 1, 1)$-labelling of $\hat{T}_{m, 2}$ with span $\ell$. Thus $\s^*_{h,1,1}(\hat{T}_{m, 2}) \le \ell$ and (\ref{hat2}) follows.
\end{proof}
\qed
 
\begin{theorem}
\label{thm:ck3}
Let $h \ge 1$ and $m, k \ge 2$ be integers. Then, for $T = \tmk$ ($k \ge 4$), $T_{m, \infty}$, $\hat{T}_{m, k}$ or $\hat{T}_{m, \infty}$, 
\be
\label{eq:bdd}
\s_{h,1,1}(T) = \s^*_{h,1,1}(T) = \max\{h+2m+1, 2h+m\}.
\ee
\end{theorem}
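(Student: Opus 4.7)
The plan is to sandwich $\s_{h,1,1}(T)$ and $\s^*_{h,1,1}(T)$ between the lower bound coming from the subtree $\hat{T}_{m,2}$ and an upper bound obtained by extending the elegant labelling of $\hat{T}_{m,2}$ in Proposition \ref{thm:a}(b) inductively to the whole of $\hat{T}_{m,\infty}$, followed by restriction to $T$. For the lower bound I would first check that each of $\tmk$ (with $k \ge 4$, the hypothesis being used precisely to guarantee a level-$2$ vertex each of whose $m$ children still has $m$ children in the tree), $T_{m,\infty}$, $\hat{T}_{m,k}$ and $\hat{T}_{m,\infty}$ contains $\hat{T}_{m,2}$ as a subtree. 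Subgraph monotonicity of $\s_{h,1,1}$ together with (\ref{hat2}) then gives $\s_{h,1,1}(T) \ge \s_{h,1,1}(\hat{T}_{m,2}) = \max\{h+2m+1, 2h+m\}$.

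For the upper bound it suffices to exhibit an elegant $C(h,1,1)$-labelling of $\hat{T}_{m,\infty}$ of span $\ell := \max\{h+2m+1,2h+m\}$, because each of the four trees embeds in $\hat{T}_{m,\infty}$ and restricting an elegant labelling preserves elegance with the same intervals. When $h \ge m+1$ we have $\ell = 2h+m$ and such a labelling is already furnished by Corollary \ref{core:c}, so the new work is the case $h \le m$, where $\ell = h+2m+1$. Here I would build a super-elegant labelling inductively, starting from the labelling of $\hat{T}_{m,2}$ in Proposition \ref{thm:a}(b), which already has the property that $I_v := f(N(v))$ is a circular interval $\mod~\ell$ of length $m+1$ for every $v$ at level $0$ or $1$. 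Assuming the labelling has been defined up to some level $t \ge 2$ with the same property for every $v$ at level $\le t-1$, consider a level-$t$ vertex $w$ with parent $v$. Both the forbidden ``disc'' $B_w := \{x : |x-f(w)|_\ell \le h-1\}$ and the interval $I_v$ contain $f(w)$, so their union is a single circular arc $\mod~\ell$ and its complement $S_w$ is also a single arc. Writing $I_v = [f(w)-a, f(w)+b]_\ell$ with $a+b=m$, a short case analysis on whether each of $a,b$ is $\ge h-1$ or $< h-1$ shows $|S_w| \ge m+1$ whenever $h \le m+1$, while $f(v) \in S_w$ because $f(v) \notin I_v$ (as $v \notin N(v)$) and $|f(v)-f(w)|_\ell \ge h$ (as $vw$ is an edge). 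I would therefore pick $I_w$ to be any circular sub-interval of $S_w$ of length $m+1$ containing $f(v)$, and assign the $m$ labels of $I_w \setminus \{f(v)\}$ bijectively to $C(w)$.

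Verifying the $C(h,1,1)$-conditions is then routine: labels in $f(C(w))$ are at cyclic distance $\ge h$ from $f(w)$ because $I_w \cap B_w = \emptyset$; the children of $w$ receive pairwise distinct labels since $|I_w|=m+1 > |C(w)|$; and the distance-$2$ and distance-$3$ separations from $f(v)$, $f(p(v))$ and the siblings of $w$ (whose labels all lie in $\{f(v)\} \cup I_v$) follow from $I_w \cap I_v = \emptyset$ combined with $f(v) \in I_w$. Different level-$t$ vertices can be processed independently, because grandchildren of a common level-$(t-1)$ ancestor through distinct level-$t$ parents sit at tree-distance $4$, carrying no $C(h,1,1)$-constraint. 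Restricting this labelling to $T \subseteq \hat{T}_{m,\infty}$ yields $\s^*_{h,1,1}(T) \le \ell$, and combining with the lower bound and (\ref{eq:elegant}) closes the loop. I expect the main obstacle to be the uniform lower bound $|S_w| \ge m+1$, regardless of the position of $f(w)$ within $I_v$, together with the observation that $S_w$ is a single arc; without these two facts, the inductive choice of $I_w$ containing $f(v)$ while avoiding $I_v$ is not obviously possible.
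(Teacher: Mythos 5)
Your proposal is correct and follows essentially the same approach as the paper's proof: the same inductive invariant that every vertex's neighbourhood receives a circular interval of length $m+1$ $\mod~\ell$, the same counting of the forbidden set (disc of radius $h-1$ around $f(w)$ united with the parent's interval, plus the parent's own label) showing at least $m+1$ admissible labels forming an arc that contains $f(v)$, and the same reduction to Corollary \ref{core:c} when $h \ge m+1$. The only differences are cosmetic and harmless: you obtain the lower bound from $\s_{h,1,1}(\hat{T}_{m,2})$ via Proposition \ref{thm:a} and subgraph monotonicity instead of from $\l_{h,1,1}+1$ with (\ref{eq:lvsc1}) and (\ref{eq:hat}), and you build a single labelling of $\hat{T}_{m,\infty}$ and restrict it to all four trees, whereas the paper runs the induction separately for $T_{m,\infty}$ and $\hat{T}_{m,\infty}$ with slightly different base labellings.
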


\begin{proof}
Observe that, if $h \ge m+1$, then (\ref{eq:bdd}) is reduced to (\ref{eq:c2}). In what follows we assume $h \le m$ and set $\ell := h+2m+1$. 

(a) We prove (\ref{eq:bdd}) for $T = \tmk$ ($k \ge 4$) or $T_{m,\infty}$ first. We have $\s_{h,1,1}(T_{m,\infty}) \ge \s_{h,1,1}(\tmk) \ge \l_{h, 1, 1}(\tmk)+1 = \ell$ by (\ref{eq:lvsc1}) and (\ref{eq:hat}). 

To prove $\s^*_{h,1,1}(T_{m,\infty}) \le \ell$, we recursively label the vertices of $T_{m,\infty}$ from lower to higher levels such that for each vertex $u$ the labels assigned to the neighbours of $u$ (including its parent if $u \ne u_0$) form a circular interval $\mod~\ell$. More explicitly, we construct an elegant $C(h,1,1)$-labelling $f$ of $T_{m,\infty}$ with span $\ell$, beginning with the following labels for vertices up to level $2$:
\begin{itemize} 
\item $f(u_0) = 0$;
\item $f(u_i) = h+i-1$, $1 \le i \le m$;
\item $f(C(u_i)) = [h+m+1, h+2m]_{\ell}$, $1 \le i \le m-h+1$;
\item $f(C(u_i)) = [2h+i-1, 2h+m+i-1]_{\ell} \setminus \{0\}$, $m-h+2 \le i \le m$.
\end{itemize}
It can be verified that the $C(h,1,1)$-conditions $\mod~\ell$ are satisfied among vertices of $T_{m,\infty}$ up to level 2. 

Note that $f(N(u_0))$ is a circular interval $\mod~\ell$, and each $f(N(u_i))$ is a circular interval $\mod~\ell$ of length $m+1$. Assume inductively that, for some $t \ge 2$, all vertices of $T_{m,\infty}$ up to level $t$ have been labelled such that:
\begin{itemize}
\item[] $P_t$: the $C(h,1,1)$-conditions $\mod~\ell$ are satisfied among vertices up to level $t$;  
\item[] $Q_t$: for every vertex $u_{i_1 \ldots i_{t-1}}$ at level $t-1$, $f(N(u_{i_1 \ldots i_{t-1}}))$ is a circular interval $\mod~\ell$. 
\end{itemize}
We proceed to prove that we can label all vertices of $T_{m,\infty}$ at level $t+1$ such that $P_{t+1}$ and $Q_{t+1}$ hold. 

A label can be used by a vertex of $C(u_{i_1 \ldots i_{t}})$ if and only if it is in the set 
$$
X := \{0, 1, \ldots, \ell - 1\} \setminus (A \cup B),
$$ 
where $A$ and $B$ are as defined in (\ref{eq:defA}) and (\ref{eq:defB}), respectively, with respect to the present integer $\ell$ (that is, $\ell = h+2m+1$). See Figure \ref{fig:ABC} for an illustration, with the understanding that this time $f(N(u_{i_1 \ldots i_{t-1}}))$ may not be a subset of $B$. 

By our assumption $Q_t$, $f(N(u_{i_1 \ldots i_{t-1}}))$ is a circular interval $\mod~\ell$ of length $m+1$. In addition, it has at least one common element with $B$, namely $f(u_{i_1 \ldots i_{t}})$. Since $f(u_{i_1 \ldots i_{t-1}}) \not \in B$, it follows that $A \cup B$ consists of a circular interval $\mod~\ell$ and a single integer $f(u_{i_1 \ldots i_{t-1}})$. Therefore, $X$ is either the union of two circular intervals $\mod~\ell$ separated by $f(u_{i_1 \ldots i_{t-1}})$ or a single circular interval $\mod~\ell$, the latter occuring if and only if $A \cup B$ itself is a circular interval $\mod~\ell$. 

Since $f(u_{i_1 \ldots i_{t}}) \in f(N(u_{i_1 \ldots i_{t-1}}))$ is in the centre of $B$, we have $|A \cap B| \ge h$, and equality holds if and only if either $f(u_{i_1 \ldots i_{t}})$ is one end of the circular interval $f(N(u_{i_1 \ldots i_{t-1}}))$ $\mod~\ell$, or $h = 1$ (and so $B = \{f(u_{i_1 \ldots i_{t}})\} \subset f(N(u_{i_1 \ldots i_{t-1}}))$), or $f(N(u_{i_1 \ldots i_{t-1}})) \subseteq B$ and $h = m + 1$. Thus $|A \cup B| = |A| + |B| - |A \cap B| = (m+2) + (2h-1) - |A \cap B| \le h+m+1$, and therefore 
$$
|X| = \ell - |A \cup B| = (h+2m+1) - |A \cup B| \ge m.
$$ 
Since $X$ is the union of at most two circular intervals $\mod~\ell$ possibly separated by $f(u_{i_1 \ldots i_{t-1}})$, it follows that there exists a subset $Y$ of $X$ with $|Y| = m$ such that $Y \cup \{f(u_{i_1 \ldots i_{t-1}})\}$ is a circular interval $\mod~\ell$. Label the vertices of $C(u_{i_1 \ldots i_{t}})$ by the elements of $Y$ in a bijective manner, so that $f(N(u_{i_1 \ldots i_{t}})) = Y \cup \{f(u_{i_1 \ldots i_{t-1}})\}$ is a circular interval $\mod~\ell$. These labels for $C(u_{i_1 \ldots i_{t}})$ do not violate the $C(h,1,1)$-conditions $\mod~\ell$ with vertices of level at most $t$. Since this holds for every $u_{i_1 \ldots i_{t}}$ at level $t$, and since we can label the sets $C(u_{i_1 \ldots i_{t}})$ independently, we can label all vertices of $T_{m,\infty}$ up to level $t+1$ such that $P_{t+1}$ and $Q_{t+1}$ are satisfied. 

By induction, we have proved that $T_{m,\infty}$ admits a $C(h,1,1)$-labelling of span $\ell$ with properties $P_t$ and $Q_t$ for all $t \ge 1$. One can see from the construction above that this is an elegant $C(h,1,1)$-labelling. Therefore, $\s^*_{h,1,1}(T_{m,\infty}) \le \ell$, which together with $\ell \le \s_{h,1,1}(T_{m,\infty}) \le \s^*_{h,1,1}(T_{m,\infty})$ yields (\ref{eq:bdd}) for $T = T_{m,\infty}$.

The restriction of the labelling above to the vertices of $T_{m,\infty}$ up to level $k$ is an elegant $C(h,1,1)$-labelling of $\tmk$, implying $\s^*_{h,1,1}(\tmk) \le \ell$ for any $k \ge 2$. This together with $\ell \le \s_{h,1,1}(\tmk) \le \s^*_{h,1,1}(\tmk)$ gives (\ref{eq:bdd}) for $T = \tmk$ ($k \ge 4$).  
 
(b) Now we prove (\ref{eq:bdd}) for $T = \hat{T}_{m, k}$ or $\hat{T}_{m, \infty}$. We have $\s_{h,1,1}(\hat{T}_{m, \infty}) \ge \s_{h,1,1}(\hat{T}_{m, k}) \ge \l_{h,1,1}(\hat{T}_{m, k}) + 1 = \ell$ by (\ref{eq:lvsc1}) and (\ref{eq:hat}). Label the vertices of $\hat{T}_{m, \infty}$ up to level $2$ in exactly the same way as in part (b) of the proof of Proposition \ref{thm:a}. Note that for this labelling $f$, $f(N(u_0))$ is a circular interval $\mod~\ell$, and $f(N(u_i))$ is a circular interval $\mod~\ell$ for $1 \le i \le m+1$. Based on this and by induction, we can prove $\s^*_{h,1,1}(\hat{T}_{m,\infty}) \le \ell$ in exactly the same way as in (a) above. Similar to (a), from this we obtain (\ref{eq:bdd}) for $T =  \hat{T}_{m,\infty}$ or $\hat{T}_{m,k}$.
\end{proof}
\qed

\begin{remark}
\label{re:c1}
{\em
In view of Remark \ref{re:c}(a), the proofs of Theorems \ref{thm:c} and \ref{thm:ck3} together give an algorithm for constructing an optimal and super elegant $C(h,1,1)$-labelling of $T_{m,\infty}$, whose restriction to the vertices up to level $k$ is an optimal and super elegant $C(h,1,1)$-labelling of $\tmk$ ($k \ge 4$). The same can be said for $\hat{T}_{m, \infty}$ and $\hat{T}_{m, k}$.
}
\end{remark}

\begin{theorem}
\label{thm:s11forD}
Let $T$ be a finite tree with $\diam \ge 3$ and $\De \ge 3$, or an infinite tree with finite maximum degree $\De \ge 3$. Then, for any integer $h \ge 1$, 
\begin{equation}
\label{eq:s11forD}
\max\{\De_2,  2h + \De - 1\} \le \s_{h,1,1}(T) \le \s^*_{h,1,1}(T) \le \max\{h+2\De-1, 2h+\De-1\}.
\end{equation}
In particular, if $h \le \De$ and $T$ contains $T_{\De-1, 2}$ as a subtree, then 
\be
\label{eq:s11forD1}
h+2\De-2 \le \s_{h,1,1}(T) \le \s^*_{h,1,1}(T) \le h+2\De-1;
\ee
if $h \le \De$ and $T$ contains $\hat{T}_{\De-1, 2}$ as a subtree, then 
\be
\label{eq:s11forD2}
\s_{h,1,1}(T) = \s^*_{h,1,1}(T) = h+2\De-1.
\ee
\end{theorem}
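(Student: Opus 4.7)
The plan is to obtain (\ref{eq:s11forD}) by combining two kinds of lower-bound arguments with an embedding-plus-restriction argument for the upper bound, and then to derive (\ref{eq:s11forD1}) and (\ref{eq:s11forD2}) as consequences by feeding Proposition \ref{thm:a} into these general bounds.

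For the lower bound in (\ref{eq:s11forD}), I would prove the two summands of the max separately. To get $\s_{h,1,1}(T) \ge \De_2$, take a heavy edge $xy$ with $d(x)+d(y) = \De_2$, note that the $\De_2$ vertices in $N(x)\cup N(y)$ are pairwise within distance three in $T$, and observe that under any $C(h,1,1)$-labelling the distance-three condition forces them to receive pairwise distinct labels $\mod \ell$, so $\ell \ge \De_2$. To get $\s_{h,1,1}(T) \ge 2h+\De-1$, pick a vertex $u$ with $d(u)=\De$ and, by translation, assume $f(u)=0$; then $f(N(u))$ is a set of $\De$ pairwise distinct labels lying in the circular interval $[h,\ell-h]_{\ell}$, which has size $\ell-2h+1$, forcing $\ell \ge 2h+\De-1$. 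The middle inequality in (\ref{eq:s11forD}) is just (\ref{eq:elegant}).

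For the upper bound, the key observation is that any (finite or infinite) tree $T$ with maximum degree $\De$ embeds as a subtree of $\hat{T}_{\De-1,\infty}$ (choose any vertex as the root, and map children inductively into the $\De-1$ available child slots of $\hat{T}_{\De-1,\infty}$). By Theorem \ref{thm:ck3} with $m=\De-1$,
$$\s^*_{h,1,1}(\hat{T}_{\De-1,\infty}) = \max\{h+2\De-1,\, 2h+\De-1\}.$$
Given an elegant optimal $C(h,1,1)$-labelling $f$ of $\hat{T}_{\De-1,\infty}$ with associated intervals $\{I_u\}$, its restriction to $V(T)$ is also elegant: for each $u\in V(T)$ we have $f(N_T(u))\subseteq f(N_{\hat{T}_{\De-1,\infty}}(u))\subseteq I_u$, and the disjointness condition $I_u\cap I_v=\emptyset$ for edges $uv$ is inherited because $E(T)\subseteq E(\hat{T}_{\De-1,\infty})$. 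This gives the claimed upper bound on $\s^*_{h,1,1}(T)$.

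Finally, the two special cases fall out by monotonicity of $\s_{h,1,1}$ under subgraphs together with Proposition \ref{thm:a}. If $T$ contains $T_{\De-1,2}$ as a subtree, then by (\ref{ck2}) with $m=\De-1$,
$$\s_{h,1,1}(T) \ge \s_{h,1,1}(T_{\De-1,2}) = \max\{h+2\De-2,\, 2h+\De-1\} \ge h+2\De-2,$$
and under $h\le\De$ the upper bound in (\ref{eq:s11forD}) becomes $h+2\De-1$, which yields (\ref{eq:s11forD1}). If $T$ contains $\hat{T}_{\De-1,2}$, then by (\ref{hat2}) we gain one more: $\s_{h,1,1}(T) \ge h+2\De-1$, matching the upper bound from (\ref{eq:s11forD}) (still using $h\le\De$), so equality holds throughout, giving (\ref{eq:s11forD2}). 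The main conceptual point — and the only step that requires real care — is the embedding-plus-restriction argument for the upper bound, specifically the verification that restricting an elegant labelling preserves elegance; everything else is a direct application of prior results in the paper.
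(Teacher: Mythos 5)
Your proposal is correct and follows essentially the same route as the paper: lower bounds from a heavy edge and a maximum-degree vertex, the upper bound by embedding $T$ into an $(\De)$-regular tree and restricting an optimal elegant $C(h,1,1)$-labelling (via Theorem \ref{thm:ck3}), and the special cases from Proposition \ref{thm:a} with subgraph monotonicity. The only cosmetic differences are that you prove the two lower bounds directly rather than citing (\ref{eq:lvsc1}), (\ref{eq:h11}) and the lower bound of (\ref{eq:c}), and you embed into $\hat{T}_{\De-1,\infty}$ whereas the paper extends $T$ to the finite $\hat{T}_{\De-1,k}$; both are covered by Theorem \ref{thm:ck3}, and your explicit check that restriction preserves elegance matches the paper's own justification.
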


\begin{proof} 
By (\ref{eq:lvsc1}) and (\ref{eq:h11}), we have $\s_{h,1,1}(T)  \ge \l_{h,1,1}(T)+1 \ge \De_2$. The other lower bound $2h + \De - 1$ is a special case of the lower bound in (\ref{eq:c}) (which itself does not need the condition $h \ge \De p$). 

Taking $T$ as rooted at a neighbour of a maximum degree vertex, we can extend $T$ to $\hat{T}_{\De-1, k}$, where $k \ge 2$ is the maximum level of a vertex of $T$. Thus, by (\ref{hat2}) and (\ref{eq:bdd}), we have $\s_{h,1,1}(T) \le \s^*_{h,1,1}(T) \le \s^*_{h,1,1}(\hat{T}_{\De-1, k}) = \max\{h+2\De-1, 2h+\De-1\}$ as stated in (\ref{eq:s11forD}). 

If $h \le \De$ and $T$ contains $T_{\De-1, 2}$ as a subtree, then $\s_{h,1,1}(T) \ge \s_{h,1,1}(T_{\De-1, 2}) = \max\{h+2\De-2, 2h+\De-1\} = h+2\De-2$ by (\ref{ck2}), which together with the upper bound in (\ref{eq:s11forD}) implies (\ref{eq:s11forD1}).

If $h \le \De$ and $T$ contains $\hat{T}_{\De-1, 2}$ as a subtree, then $h+2\De-1 \le \s_{h,1,1}(T)$ by (\ref{hat2}) and (\ref{eq:bdd}), and $\s^*_{h,1,1}(T) \le h+2\De-1$ by (\ref{eq:s11forD}), leading to (\ref{eq:s11forD2}).
\end{proof}
\qed

Note that, unsurprisingly, (\ref{eq:s11forD}) yields the same result as (\ref{eq:c1}) when $h \ge \De$. Since in this case a linear time exact algorithm for the $C(h,1,1)$-labelling problem was given in Corollary \ref{core:c}, we may assume $h < \De$ in the following result. 

\begin{theorem}
\label{thm:1.4}
Given any integer $h \ge 1$, there exists a linear-time $(1+\ve)$-factor approximation algorithm for the $C(h,1,1)$-labelling problem for the class of finite trees, where $\ve = (2\De - 3)/(2\De + 4)$. Moreover, this algorithm is a $1.4$-factor approximation algorithm for finite trees with $\De_2 = 2\De - 1$ or $2\De$. 
\end{theorem}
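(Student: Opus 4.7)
The plan is to show that the algorithm implicit in the proof of Theorem \ref{thm:s11forD} already achieves the claimed performance, so only a ratio analysis remains. When $h \ge \De$, Corollary \ref{core:c} provides a linear-time exact algorithm (ratio $1$), so I restrict attention to $h < \De$. For this range I would first compute $\De$, $\De_2$ and a rooting of $T$ in one traversal, then view $T$ as a subtree of $\hat{T}_{\De-1, k}$ (with $k$ the height) and apply to $T$ the labelling procedure from the proof of Theorem \ref{thm:ck3}: at each vertex $u$ one selects a size-$(d(u)-1)$ subset $Y$ of a union of at most two circular intervals mod $\ell$, which takes constant time per vertex. This runs in linear time and yields span at most $\max\{h+2\De-1,\ 2h+\De-1\} = h + 2\De - 1$, since $h \le \De$.

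For the ratio I would use the lower bound $\s_{h,1,1}(T) \ge \max\{\De_2,\ 2h + \De - 1\}$ from Theorem \ref{thm:s11forD}. The crucial input is the inequality $\De_2 \ge \De + 2$, valid whenever $\diam(T) \ge 3$ and $\De \ge 3$: a vertex $u$ of maximum degree cannot have all its neighbours be leaves, for otherwise $T \cong K_{1,\De}$ would force $\diam(T) = 2$; hence some neighbour $v$ of $u$ satisfies $d(v) \ge 2$, giving $\De_2 \ge d(u) + d(v) \ge \De + 2$. This step is the one subtlety in the whole argument; everything else is bookkeeping.

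Set
\[
\rho(h) := \frac{h + 2\De - 1}{\max\{\De_2,\ 2h + \De - 1\}}.
\]
Treating $h$ as a real parameter, $(h + 2\De - 1)/\De_2$ is increasing in $h$, while $(h + 2\De - 1)/(2h + \De - 1)$ has derivative $-(3\De - 1)/(2h + \De - 1)^2 < 0$, so it is decreasing. Consequently $\rho$ attains its maximum at the switching point $h = (\De_2 - \De + 1)/2$, where direct substitution gives $\rho \le (\De_2 + 3\De - 1)/(2\De_2)$. This expression is decreasing in $\De_2$, so combining with $\De_2 \ge \De + 2$ yields
\[
\rho \le \frac{(\De + 2) + 3\De - 1}{2(\De + 2)} = \frac{4\De + 1}{2\De + 4} = 1 + \frac{2\De - 3}{2\De + 4} = 1 + \ve,
\]
as claimed.

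For the closing statement about $\De_2 \in \{2\De - 1,\ 2\De\}$, I would plug these into $(\De_2 + 3\De - 1)/(2\De_2)$, obtaining $(5\De - 2)/(4\De - 2)$ and $(5\De - 1)/(4\De)$ respectively. Cross-multiplying each against $7/5$ reduces the desired inequality to $3\De \ge 4$ and $3\De \ge -5$, both trivially true for $\De \ge 3$; hence both ratios are at most $7/5 = 1.4$, establishing the $1.4$-factor guarantee in the special cases.
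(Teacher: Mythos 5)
Your proposal is correct and takes essentially the same route as the paper: restrict to $h<\De$, run on $T$ the labelling construction behind Theorems \ref{thm:ck3} and \ref{thm:s11forD} (linear time, span $h+2\De-1$), and bound the ratio $(h+2\De-1)/\max\{\De_2,\,2h+\De-1\}$ at the switching point. Your explicit observation that $\diam \ge 3$ and $\De \ge 3$ force $\De_2 \ge \De+2$ is exactly the detail needed to justify the paper's ``it can be verified'' step, and your computations for $\De_2 = 2\De-1$ and $2\De$ match the $1.4$ claim.
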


\begin{proof}
It suffices to consider trees $T$ with $\diam \ge 3$ and $\De \ge 3$. We may assume $h < \De$ as mentioned above. 

We view $T$ as a subtree of $T_{\De-1, k}$ for some $k \ge 2$, with root $u_0$ at a neighbour of a maximum degree vertex (if $\De_2 \le 2\De - 1$) or at a neighbour of one end of a heavy edge other than the other end (if $\De_2 = 2\De$). This enables us to follow the proof of Theorem \ref{thm:ck3} to give a $C(h,1,1)$-labelling with span the upper bound in (\ref{eq:s11forD}). (As seen in the second last paragraph in the proof of Theorem \ref{thm:ck3}, this upper bound holds for any $k \ge 2$.) We assume $k \ge 3$ in the following since the simpler case $k=2$ can be dealt with similarly by following the proofs of (\ref{eq:b}) and (\ref{ck2}).

Denote $\ell := h + 2\De - 1$. Following the proof of Theorem \ref{thm:ck3}, we apply the following algorithm to obtain $f: V(T) \rightarrow \{0, 1, \ldots, \ell - 1\}$:
\begin{itemize}
\item $f(u_0) = 0$;
\item $f(u_i) = h+i-1$, $1 \le i \le d(u_0)$;
\item for $1 \le i \le \max\{0, d(u_0) - h + 1\}$, assign distinct labels from $[h+\De, h+2\De-2]_{\ell}$ to the vertices in $C(u_i)$; and for $\max\{1, d(u_0) - h + 2\} \le i \le \ldots, d(u_0)$, assign distinct labels from $[2h+i-1, 2h+\De+i-2]_{\ell} \setminus \{0\}$ to the vertices in $C(u_i)$;
\item assume, for some $t$ with $1 \le t < k$, all vertices of $T$ up to level $t$ have been labelled, then for every vertex $u_{i_1 \ldots i_t}$ of $T$ at level $t$ choose a subset $Y$ of $X$ with $|Y| = d(u_{i_1 \ldots i_t})$ such that $Y \cup \{f(u_{i_1 \ldots i_{t-1}})\}$ is a circular interval $\mod~\ell$, and then assign distinct labels from $Y$ to the children of $u_{i_1 \ldots i_{t}}$, where $X := \{0, 1, \ldots, \ell - 1\} \setminus (A \cup B)$ with $A := f(N(u_{i_1 \ldots i_{t-1}})) \cup \{f(u_{i_1 \ldots i_{t-1}})\}$ and $B := [f(u_{i_1 \ldots i_{t}})-(h-1), f(u_{i_1 \ldots i_{t}})+(h-1)]_{\ell}$.
\end{itemize}
It follows from the proof of Theorem \ref{thm:ck3} that such a set $Y$ always exists and $f$ is a $C(h, 1, 1)$-labelling of $T$ with span $\ell$. Obviously, the algorithm above runs in linear time. In view of (\ref{eq:s11forD}), its performance ratio is at most
$$
\frac{h + 2\De - 1}{\max\{\De_2,  2h + \De - 1\}}.
$$
It can be verified that this ratio is at most $1+\ve$, no matter whether $2h \le \De_2 - \De + 1$ or not. In particular, if $\De_2 = 2\De - 1$ or $2\De$, then one can show that this ratio is at most $7/5$.  
\end{proof}
\qed


\section{Concluding remarks}
\label{sec:concl}

In Theorem \ref{thm:h11} we give a linear time $(\De_2 + \De - 2)/(\De_2 - 1)$-approximation algorithm for the $L(h, p, p)$-labelling problem for finite trees. A polynomial time approximation algorithm for this problem with better performance ratio is desirable.  
Similarly, it would be good if one can devise a polynomial time approximation algorithm for the $C(h, 1, 1)$-labelling problem for finite trees whose performance ratio is smaller than the one in Theorem \ref{thm:1.4}. 

We do not know the exact values of $\s_{h, p, p}(T)$ and $\s^*_{h, p, p}(T)$ for $T = \tmk$,$T_{m,\infty}$, $\hat{T}_{m, k}$ or $\hat{T}_{m,\infty}$ even when $h \ge (m+1)p > m+1$. Determining these values or improving the upper bound in (\ref{eq:d}) will help understand $\s_{h, p, p}$ and $\s^*_{h, p, p}$ for general trees. 

Define $\bar{\l}_{h, 1, 1}(G)$ and $\bar{\s}_{h, 1, 1}(G)$ to be the minimum integer $\ell$ such that $G$ admits a super elegant $L(h, 1, 1)$-labelling, and a super elegant $C(h, 1, 1)$-labelling, with span $\ell$, respectively, or $\infty$ if $G$ does not admit such a labelling. Clearly, $\l^*_{h, 1, 1}(G) \le \bar{\l}_{h, 1, 1}(G)$ and $\s^*_{h, 1, 1}(G) \le \bar{\s}_{h, 1, 1}(G)$. As mentioned in Remark \ref{re:super}, for $T = \tmk$ ($k \ge 4$), $T_{m, \infty}$, $\hat{T}_{m, k}$ or $\hat{T}_{m, \infty}$, $T$ admits a super elegant $L(h,1,1)$-labelling and moreover $\bar{\l}_{h, 1, 1}(T) = \l^*_{h, 1, 1}(T) = \l_{h, 1, 1}(T) = h + 2m$. In view of Remark \ref{re:c}, for any finite or infinite tree $T$ with $\diam \ge 3$ and $3 \le \De \le h$, $T$ admits a super elegant $C(h,1,1)$-labelling and moreover $\bar{\s}_{h, 1, 1}(T) = \s^*_{h, 1, 1}(T) = \s_{h, 1, 1}(T) = 2h + \De - 1$; in particular, this holds for $\tmk$, $T_{m, \infty}$, $\hat{T}_{m, k}$ and $\hat{T}_{m, \infty}$. Since any tree $T$ can be viewed as a subtree of some $\hat{T}_{m, \infty}$, and since the restriction of an elegant $L(h, 1, 1)$ or $C(h, 1, 1)$-labelling of $\hat{T}_{m, \infty}$ to $T$ is an elegant $L(h, 1, 1)$ or $C(h, 1, 1)$-labelling of $T$, from $\l^*_{h, 1, 1}(\hat{T}_{m, \infty}) < \infty$ and $\s^*_{h, 1, 1}(\hat{T}_{m, \infty}) < \infty$ it follows that any tree admits an elegant $L(h, 1, 1)$-labelling as well as an elegant $C(h, 1, 1)$-labelling. (This is also implied in \cite[Theorem 4]{FGKLP} which asserts that $\l^*_{h, 1, 1}$ and $\s^*_{h, 1, 1}$ can be computed in polynomial time for any tree and any integer $h \ge 1$.) However, from $\bar{\l}_{h, 1, 1}(\hat{T}_{m, \infty}) < \infty$ and $\bar{\s}_{h, 1, 1}(\hat{T}_{m, \infty}) < \infty$ we cannot derive that any tree admits a super elegant $L(h, 1, 1)$-labelling and a super elegant $C(h, 1, 1)$-labelling, because it is not clear whether the restriction of a super elegant labelling to a subtree is also super elegant. Therefore, we may ask under what conditions a tree admits a super elegant $L(h, 1, 1)$-labelling or a super elegant $C(h, 1, 1)$-labelling. Also it would be interesting to characterise those trees $T$ such that $\bar{\l}_{h, 1, 1}(T) = \l^*_{h, 1, 1}(T)$, and those trees such that $\bar{\s}_{h, 1, 1}(T) = \s^*_{h, 1, 1}(T)$. 
  
\medskip

\noindent {\bf Acknowledgments}~~We appreciate the anonymous referees for their helpful comments. Zhou was supported by a Future Fellowship (FT110100629) of the Australian Research Council.  

\small

\end{document}